\documentclass[12pt]{article}

\usepackage{amsmath,amssymb,  latexsym, amsthm, amscd, fancyhdr, fullpage, url}

\usepackage[svgnames]{xcolor}
\usepackage{tikz}

\pgfdeclarelayer{edgelayer}
\pgfdeclarelayer{nodelayer}
\pgfsetlayers{edgelayer,nodelayer,main}

\tikzstyle{none}=[inner sep=0pt]
\definecolor{hexcolor0xf81e1c}{rgb}{0.973,0.118,0.110}
\definecolor{hexcolor0x3c00ff}{rgb}{0.235,0.000,1.000}
\definecolor{hexcolor0x24fe00}{rgb}{0.141,0.996,0.000}

\tikzstyle{whitevertex}=[circle,fill=White,draw=Black, scale = 0.5]
\tikzstyle{vertex}=[circle,fill=White,draw=Black, scale = 0.5]
\tikzstyle{redvertex}=[circle,fill=hexcolor0xf81e1c,draw=Black]
\tikzstyle{bluevertex}=[circle,fill=hexcolor0x3c00ff,draw=Black]
\tikzstyle{greenvertex}=[circle,fill=hexcolor0x24fe00,draw=Black]
\tikzstyle{textbox}=[rectangle,fill=none,draw=none]

\tikzstyle{arc}=[Black, ->]

\newtheorem{theorem}{Theorem}[section]

\newtheorem{corollary}[theorem]{Corollary}
\newtheorem{lemma}[theorem]{Lemma}
\newtheorem{proposition}[theorem]{Proposition}
\newtheorem{observation}[theorem]{Observation}

\parskip 2mm

\begin{document}

\title{Switching $(m, n)$-mixed graphs with respect to Abelian groups}

\author{Etienne Leclerc\thanks{Mathematics and Statistics, University of Victoria, CANADA, etiennel@uvic.ca, gmacgill@uvic.ca}\,, 
Gary MacGillivray\thanks{Research supported by NSERC}\ $^*$,
Jacqueline M. Warren
\thanks{Department of Mathematics, University of California, San Diego, USA, jmariadwarren@gmail.com}}
\date{}

\maketitle 

\begin{abstract}
We extend results of Brewster and Graves for switching $m$-edge coloured graphs 
with respect to a cyclic group to switching $(m, n)$-mixed graphs with respect to an
Abelian group.  
In particular, we establish the existence of a $(m, n)$-mixed graph $P_\Gamma(H)$
with the property that a $(m, n)$-mixed graph $G$ is switch equivalent to $H$ if and only if it is a special 
subgraph of $P_\Gamma(H)$, and the property that that $G$ can be switched to have a homomorphism to  $H$ if and only if it has a homomorphism (without switching) to  $P_\Gamma(H)$.
We consider the question of deciding whether a $(m, n)$-mixed graph can be switched so that it 
has a homomorphism to a proper subgraph, i.e. whether it can be switched so 
that it isn't a core.  We show that this question is NP-hard for arbitrary
groups and NP-complete for Abelian groups.
Finally, we consider the complexity of the switchable $k$-colouring problem
for $(m, n)$-mixed graphs and prove a dichotomy theorem in the cases where $m \geq 1$.
\end{abstract}

\section{Overview}

In what follows, we define a switching operation for $(m, n)$-mixed graphs in such a way
that it generalizes the pushing operation for oriented graphs (e.g. see \cite{K, KM}), and the switching operation for $m$-edge-coloured graphs (e.g. see \cite{BG, SS}).
The first step is to define $(m, n)$-mixed graphs and indicate how they generalize oriented 
graphs and edge-coloured graphs.
That is done in the next section.
In Section 3 we define the general switching operation with respect to a finite group, and
 the concept of switch equivalence.
Section 4 is concerned with the concept of a switchable homomorphism.
We prove results which show that  the main results of the theory previously developed for 
switching hold in this more general context.
In Section 5 we focus on the situation where the group of possible switches is Abelian,
and show that other results from the theories of  pushing and switching hold.  
In particular, for each $(m, n)$-mixed graph $H$ there is a special $(m, n)$-mixed graph $P(H)$ such that $G$ can be switched so that the resulting graph has a homomorphism to $H$ if and only if there is a homomorphism of $G$ to $P(H)$.
This theorem extends and contains results from \cite{BG, KM, SS}.
Section 6 is concerned with the concept of a switchable core.
The main results include an extension of  a result of Brewster and Graves \cite{BG} for switching  $m$-edge-coloured graphs with respect to a cyclic group  to  switching $(m, n)$-mixed graphs with respect to an Abelian group,
and determining the complexity of the problem of deciding whether is a given $(m, n)$-mixed graph
is not a core.
In the last section we consider the complexity of the $k$-colouring problem for $(m, n)$-mixed graphs.
We prove a dichotomy theorem in the cases where $m \geq 1$.

\section{Homomorphisms of $(m, n)$-mixed graphs}

A \textit{mixed graph} is an ordered triple $G=(V(G), E(G), A(G))$, where 
$V(G)$ is a set of objects called \emph{vertices},
$E(G)$ is a set of unordered pairs of not necessarily distinct vertices called \emph{edges}, and 
$A(G)$ is a set of ordered pairs of not necessarily distinct vertices called \emph{arcs}.

An \emph{$(m, n)$-mixed graph} is a mixed graph in which each edge is assigned one of the 
$m$ colours $1, 2, \ldots, m$ and each arc is assigned one of the $n$ colours $1, 2, \ldots, n$.
For $1 \leq i \leq m$, let $E_i(G)$ be \emph{the set of edges of colour $i$}, and
for $1 \leq j \leq n$, let  $A_j(G)$ be  \emph{the set of arcs of colour $j$}.

By definition, a mixed graph $G$ is obtained from a simple undirected graph $\mathit{underlying}(G)$
by choosing an orientation for some of its edges.
A $(m,n)$-mixed graph $H$ arises from a simple undirected graph $\mathit{underlying}(H)$
by choosing an orientation for some of its edges, and then a colour for each edge and each arc.
It is possible to make a more general definition in which loops and multiple edges are 
allowed, but we will not do so.  
We consider only $(m,n)$-mixed graphs which arise from simple undirected graphs in the way described above.

When the context is clear we write $V, E$ and $A$ instead of $V(G), E(G)$ and $A(G)$, respectively, and 
similarly for other subsets or parameters related to a graph.

Let $G$ and $H$ be $(m, n)$-mixed graphs.
A \emph{homomorphism} of $G$ to $H$ is a function $f: V(G) \to V(H)$ such that
if the edge $ab\in E_i(G)$, then $f(a)f(b)\in E_i(H)$,
and if the arc $ab\in A_j(G)$ then $f(a)f(b)\in A_j(H)$. 
If there is a homomorphism of $G$ to $H$, we may write $G\rightarrow H$.  
A homomorphism $G \to H$ preserves edges, arcs, and colours.

Observe that $(1, 0)$-mixed graphs are undirected graphs, $(m, 0)$-mixed graphs are $m$-edge-coloured graphs, and $(0, 1)$-mixed graphs are directed graphs.
The definition of homomorphism given above restricts to the usual definition in each of these cases.
The $(m, n)$-mixed graphs and homomorphisms between them were first introduced in 
\cite{NR} as a means of unifying the seemingly parallel theories of homomorphisms of 
oriented graphs and homomorphisms of 2-edge-coloured graphs: quite often statements that
hold for oriented graphs, for example, also hold for 2-edge-coloured graphs with virtually the same proof (e.g. see \cite{KSZ}).  In such cases it is reasonable to look for a general theorem about $(m, n)$-mixed graphs that contains these statements as special cases (for example, see \cite{NR}).

\section{Switching and switch equivalence}

We now define the switching operation on a $(m, n)$-mixed graph $G$. 
Let $\phi\in S_m$, $\psi\in S_n$, and $\pi=(p_1,p_2, \ldots, p_n)\in(\mathbb{Z}_2)^n$.
Informally, for the ordered triple $\gamma=(\phi,\psi,\pi)$ and $v \in V$, define \emph{switching at $v$ with respect to $\gamma$} to be the operation that transforms $G$ into the $(m, n)$-mixed graph $G^{(v, \gamma)}$ by 
permuting  colours of the edges incident with $v$ according to $\phi$, the colours 
of the arcs incident with $v$ according to $\psi$, and 
reversing the orientation of arcs of colour $j$ incident with $v$ if and only if $p_j=1$. 
Arc reversals are imagined as happening at the same time as colour changes.
Edges can only switch colours with edges, and arcs can only switch colours with arcs.
If $vv$ is a loop (edge or arc), then the permutation $\phi$ or $\psi$ is applied twice because 
there are two incidences with $v$.
Thus,  edges of colour $i$ joining distinct vertices in $G$ have colour $\phi(i)$ in $G^{(v, \gamma)}$,
arcs of colour $j$ joining distinct vertices in $G$ have colour $\psi(j)$ in $G^{(v, \gamma)}$ and 
their orientation is reversed if and only if $p_j = 1$.
Edge loops in $G$ of colour $i$ have colour $\phi(\phi(i))$ in $G^{(v, \gamma)}$, and
arc loops in $G$ of colour $i$ have colour $\psi(\psi(i))$ in $G^{(v, \gamma)}$

It follows from the definition of the wreath product $S_2\wr S_n$ (also known as the hyperoctahedral group) 
that the collection of all possible switches that can be applied at vertices of a $(m, n)$-mixed graph is naturally isomorphic to the direct product pf $S_m$ with the wreath product 
$S_m\otimes(S_2\wr S_n)$. 
As an aside, it is known that the hyperoctahedral group is the automorphism group of the $n$-dimensional hypercube \cite{H}.


If the permutation group $\Gamma$ is a  subgroup of $S_m\otimes(S_2\wr S_n)$, then we will call $\Gamma$ a \emph{$(m, n)$-switching group}.

Let $G$ be a $(m, n)$-mixed graph and $\Gamma$ be a $(m, n)$-switching group.
A \emph{switching pair} is an ordered pair $(v, \gamma)$, where $v \in V$ and $\gamma \in \Gamma$,
Given a sequence of switching pairs $\mathcal{S} = (v_1, \gamma_1), (v_2, $ $\gamma_2), \ldots, (v_t, \gamma_t)$, where $v_i \in V$ and $\gamma_i \in \Gamma$ for $i = 1, 2, \ldots, t$, we recursively define
$$G^{\mathcal{S}} = G^{(v_1, \gamma_1), (v_2, \gamma_2), \ldots, (v_t, \gamma_t)}
= \left(G^{(v_1, \gamma_1)}\right)^{(v_2, \gamma_2), (v_3, \gamma_3), \ldots, (v_t, \gamma_t)}.$$

\begin{proposition}
Let $X = \{v_1, v_2, \ldots, v_t\}$ be an independent set in the $(m, n)$-mixed graph $G$, and
let $\Gamma$ be a $(m, n)$-switching group.
Then for any permutation $\pi$ of the elements of $X$ and any $\gamma \in \Gamma$,
$$G^{(v_1, \gamma), (v_2, \gamma), \ldots, (v_t, \gamma)}
= G^{(\pi(v_1), \gamma), (\pi(v_2), \gamma), \ldots, (\pi(v_t), \gamma)}.$$
\label{SwitchIndep}
\end{proposition}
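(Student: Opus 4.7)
The plan is to reduce to the case of swapping two adjacent switches in the sequence, and to observe that the switch at a vertex $v$ only affects edges and arcs that are incident with $v$. Since the paper excludes loops, switching at $v$ leaves every edge and every arc whose endpoints miss $v$ entirely unchanged (colour and, for arcs, orientation).

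Concretely, I would first prove the following claim: if $u$ and $v$ are distinct, non-adjacent vertices of a loopless $(m,n)$-mixed graph $G$, and $\gamma_1, \gamma_2 \in \Gamma$, then
\[
G^{(u,\gamma_1),(v,\gamma_2)} \;=\; G^{(v,\gamma_2),(u,\gamma_1)}.
\]
This is proved by checking each edge/arc $xy$ separately in four cases according to whether $\{x,y\}$ meets $\{u\}$, $\{v\}$, both, or neither. Since $u$ and $v$ are non-adjacent and there are no loops, the case $\{x,y\} = \{u,v\}$ is empty, so no edge/arc is incident with both $u$ and $v$. Hence each edge/arc is touched by at most one of the two switching operations, and the two orderings produce the same colour (and, for arcs, the same orientation).

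With the commutativity claim in hand, the proposition follows by a standard bubble-sort argument. Any permutation $\pi$ of $\{v_1,\ldots,v_t\}$ can be written as a product of adjacent transpositions. Consecutive entries $(v_i,\gamma),(v_{i+1},\gamma)$ in the sequence of switching pairs involve two distinct, non-adjacent vertices (since $X$ is independent), so by the claim they may be swapped without changing $G^{\mathcal{S}}$. Applying finitely many such swaps transforms the sequence $(v_1,\gamma),\ldots,(v_t,\gamma)$ into $(\pi(v_1),\gamma),\ldots,(\pi(v_t),\gamma)$ while preserving the resulting $(m,n)$-mixed graph.

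The main (and only real) obstacle is bookkeeping in the base claim: one has to be careful that the effect of switching at $u$ on an edge or arc incident with $u$ depends only on the edge/arc's colour (and, for arcs, which of $u,y$ is the tail), not on anything that a subsequent switch at $v$ could alter, and vice versa. The loopless hypothesis inherited from the standing assumption on $(m,n)$-mixed graphs in Section~2 is what prevents the awkward case of a loop at $u$ being touched twice; once that is noted, the verification is routine.
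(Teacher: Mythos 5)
Your proposal is correct and rests on exactly the same observation as the paper's (one-line) proof: since $X$ is independent, each edge or arc is affected by at most one of the switches, so the order of application is immaterial. The bubble-sort reduction to adjacent transpositions is just a more formal packaging of the same idea.
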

{\bf Proof.}
Since $X$ is an independent set, the colour of each edge or arc is affected by at most one switch.
Therefore the order in which the switches are applies does not matter.
$\Box$

Let $\Gamma$ be a $(m, n)$-switching group.
The relation $\sim_\Gamma$ on the set of all $(m, n)$-mixed graphs defined by 
$G \sim_\Gamma H$ if and only if there is a finite sequence $\mathcal{S}$ of switching pairs
such that $G^{\mathcal{S}} = H$ is an equivalence relation.  
When $G \sim_\Gamma H$ we say that $G$ and $H$ are \emph{$\Gamma$-switch equivalent}.
The \emph{$\Gamma$-switching equivalence class of the $(m, n)$-mixed graph $G$} is denoted by $[G]_\Gamma$.

\section{Switchable Homomorphisms}

Let $G$ and $H$ be $(m, n)$-mixed graphs.
We say that there is a  \textit{$\Gamma$-switchable homomorphism} of $G$ to $H$ if there is
a homomorphism from some element $G'\in[G]_\Gamma$ to $H$.
If some such homomorphism $f$ exists, we say that \emph{$G$ is $\Gamma$-switchably homomorphic} to $H$, and write $f: G\xrightarrow[\Gamma]{} H$.
We write $G\xrightarrow[\Gamma]{} H$ when the existence of a homomorphism matters, and its name does not.
 
Our definition for $\Gamma$-switching restricts to the definitions given in \cite{BG, KM, SS} for switching or pushing in the case of $(m,0)$ or $(0,1)$ graphs.
In the former case, $\Gamma$ consists of elements of the form $(\phi,e,e)$, where $e$ is the 
identity element of the relevant group.
In the latter case, $\Gamma$ consists of the two elements $(e,e,0)$ and $(e,e,1)$. 
Our definition of a $\Gamma$-switchable homomorphism thus naturally restricts to the previous definitions of switching and pushing.

\begin{observation}
Let $G$ and $H$ be $(m, n)$-mixed graphs and let $\Gamma$ be a $(m, n)$-switching group.
A $\Gamma$ switchable homomorphism of $G$ to $H$ is a homomorphism of 
$\mathit{underlying}(G)$ to $\mathit{underlying}(H)$.
\label{UnderlyingHom}
\end{observation}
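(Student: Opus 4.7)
The plan is to unwind the definitions and use the fact that the switching operation only alters colours of edges and colours and orientations of arcs, but does not change which pairs of vertices are joined. Concretely, suppose $f: G \xrightarrow[\Gamma]{} H$. By definition there exists $G' \in [G]_\Gamma$ and a homomorphism (in the $(m,n)$-mixed sense) $f : G' \to H$.

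The first step is to observe that $\mathit{underlying}(G') = \mathit{underlying}(G)$. Indeed, by the definition of switching at a vertex $v$ with respect to $\gamma = (\phi,\psi,\pi)$, the edge set $E(G)$ and arc set $A(G)$ change only in that an edge $uv \in E_i$ may become an edge in $E_{\phi(i)}$ and an arc $uv \in A_j$ may become an arc in $A_{\psi(j)}$ with orientation reversed when $p_j = 1$; in particular, no edge or arc is created or destroyed, and no edge is converted to an arc or vice versa. So the underlying simple graph is invariant under a single switch, and hence by induction under any finite sequence of switches.

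The second step is to note that, by definition of a homomorphism of $(m, n)$-mixed graphs, if $ab$ is an edge (of some colour) in $G'$ then $f(a)f(b)$ is an edge in $H$, and if $ab$ is an arc (of some colour) in $G'$ then $f(a)f(b)$ is an arc in $H$. Because the $(m,n)$-mixed graphs under consideration arise from simple undirected graphs (so there are no loops in $H$), in either case $f(a) \neq f(b)$ and the pair $\{f(a), f(b)\}$ is an edge of $\mathit{underlying}(H)$. Thus every edge of $\mathit{underlying}(G') = \mathit{underlying}(G)$ is sent by $f$ to an edge of $\mathit{underlying}(H)$, which is exactly the statement that $f$ is a homomorphism $\mathit{underlying}(G) \to \mathit{underlying}(H)$.

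There is no real obstacle here; the entire content is bookkeeping about what the switching operation does and does not alter. The only point that requires a small amount of care is confirming that switching never converts an edge into an arc or vice versa and never changes the set of adjacent pairs, which is immediate from the informal/formal description in Section 3.
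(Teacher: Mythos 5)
Your proof is correct: the paper states this as an Observation with no written proof, treating it as immediate from the definitions, and your argument is exactly the expected unwinding (switching preserves the underlying graph; a homomorphism of $(m,n)$-mixed graphs sends edges to edges and arcs to arcs, hence adjacent pairs to adjacent pairs). Nothing is missing.
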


We will show that $\Gamma$-switchable homomorphisms compose.  Some easy consequences
of the definitions above are recorded first.

\begin{observation}\label{Obs3.1}
Let $G$ and $H$ be  $(m, n)$-mixed graphs, and let $\Gamma$ be a $(m, n)$-switching group.  Then
\begin{enumerate}
\item if $G  \in [H]_\Gamma$, then $G \xrightarrow[\Gamma]{} H$;
\item if $G \to H$, then $G \xrightarrow[\Gamma]{} H$; 
\item if $F$ is a subgraph of $G$ and $G \xrightarrow[\Gamma]{} H$, then $F \xrightarrow[\Gamma]{} H$.
\end{enumerate}
\end{observation}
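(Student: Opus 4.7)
The plan is to handle the three items in order; all follow from unpacking the definitions of $\Gamma$-switch equivalence and $\Gamma$-switchable homomorphism, together with a locality property of a single switch.

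For (1), suppose $G \in [H]_\Gamma$. Since $\sim_\Gamma$ is an equivalence relation (as noted just before the definition), $H \in [G]_\Gamma$ as well. Then $G' = H$ is an element of $[G]_\Gamma$ and the identity map $H \to H$ is a homomorphism, so $G \xrightarrow[\Gamma]{} H$ is witnessed immediately. For (2), if $f: G \to H$ is a homomorphism, then $G$ itself lies in $[G]_\Gamma$ (via the empty switching sequence, or equivalently a switch by the identity element of $\Gamma$), and $f$ serves as the required homomorphism.

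Part (3) is the only substantive step. Suppose $G \xrightarrow[\Gamma]{} H$ is witnessed by a homomorphism $f: G^{\mathcal{S}} \to H$, where $\mathcal{S} = (v_1, \gamma_1), (v_2, \gamma_2), \ldots, (v_t, \gamma_t)$. My plan is to use the ``restriction'' of $\mathcal{S}$ to $F$: let $\mathcal{S}'$ be the subsequence of $\mathcal{S}$ obtained by deleting every pair $(v_i, \gamma_i)$ with $v_i \notin V(F)$, and set $F' = F^{\mathcal{S}'} \in [F]_\Gamma$. I would then argue that the restriction $f|_{V(F)}$ is a homomorphism $F' \to H$, which witnesses $F \xrightarrow[\Gamma]{} H$.

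The key fact to verify, and the only place the argument has anything to do — so, the main (if mild) obstacle — is the \emph{locality} of a single switch: switching at $v$ affects only the colours and orientations of edges and arcs incident with $v$. Granting this, two consequences follow. First, if $v_i \notin V(F)$, the corresponding switch does not change any edge or arc of $F$, since every edge or arc of $F$ has both endpoints in $V(F)$; hence deleting such pairs from $\mathcal{S}$ leaves the effect on $F$ unchanged. Second, for $v_i \in V(F)$, the switch acts on an edge or arc of $F$ in exactly the same way as it acts on the same edge or arc viewed inside $G$. Together these imply that $F'$ is a sub-$(m, n)$-mixed graph of $G^{\mathcal{S}}$ with matching colours and orientations, so $f|_{V(F)}: F' \to H$ preserves edges, arcs, and their colours, completing the proof.
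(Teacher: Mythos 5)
The paper records this observation without proof, as an ``easy consequence of the definitions,'' so there is no authorial proof to compare against; your argument is correct and supplies exactly the details the authors leave implicit. Parts (1) and (2) are immediate as you say, and for part (3) your locality argument --- a switch at $v \notin V(F)$ touches no edge or arc of $F$ since those have both endpoints in $V(F)$, while switches at vertices of $F$ act on its edges and arcs exactly as they do inside $G$, with the relative order of the surviving switches preserved --- is the right justification that restricting both the switching sequence and the homomorphism to $F$ works. (One could alternatively deduce (3) from (2) together with Proposition~\ref{compose}, but that proposition is proved only afterwards, so your direct argument is the appropriate one at this point in the development.)
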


\begin{proposition}
Let $F, G$ and $H$ be $(m, n)$-mixed graphs, and let  $\Gamma$ be a $(m, n)$-switching group.
If  $G \xrightarrow[\Gamma]{} H$ and $H \xrightarrow[\Gamma]{} F$,
then $G \xrightarrow[\Gamma]{} F$.
\label{compose}
\end{proposition}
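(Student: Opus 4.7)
The plan is to reduce the switchable composition to the composition of ordinary homomorphisms by ``pulling back'' a switching sequence along a given homomorphism. Unpacking the hypotheses, there exist $G'\in[G]_\Gamma$ with an ordinary homomorphism $f\colon G'\to H$, and $H'\in[H]_\Gamma$ with an ordinary homomorphism $g\colon H'\to F$. Let $\mathcal{S}_H=(v_1,\gamma_1),\ldots,(v_t,\gamma_t)$ be a switching sequence on $H$ with $H^{\mathcal{S}_H}=H'$. The goal is to produce a graph $G''\in[G']_\Gamma=[G]_\Gamma$ such that $f$ remains a homomorphism $G''\to H'$; then $g\circ f\colon G''\to F$ gives the required $\Gamma$-switchable homomorphism.

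The construction is to define $\mathcal{S}_G$ to be the concatenation, in order $i=1,\ldots,t$, of the blocks
\[
\{(u,\gamma_i)\;:\;u\in f^{-1}(v_i)\},
\]
listed in any order within each block. The key observation enabling this is that $f^{-1}(v_i)$ is an independent set of $G'$: if $u,u'\in f^{-1}(v_i)$ were joined by an edge or arc, then $f(u)f(u')$ would be a loop at $v_i$ in $H$, contradicting the convention that $(m,n)$-mixed graphs have no loops. Moreover, switching preserves the underlying graph, so $f^{-1}(v_i)$ remains independent in every intermediate graph. By Proposition~\ref{SwitchIndep}, the order within each block is immaterial, so $\mathcal{S}_G$ is well-defined up to the resulting graph.

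The main verification is to show by induction on $i$ that after applying the first $i$ blocks of $\mathcal{S}_G$ to $G'$, the map $f$ is still a homomorphism into the graph obtained by applying $(v_1,\gamma_1),\ldots,(v_i,\gamma_i)$ to $H$. The base case is the hypothesis on $f$. For the inductive step, consider any edge or arc $uw$ in the current version of $G'$, whose image $f(u)f(w)$ in the current version of $H$ has the matching type and colour. There are only two cases: either neither of $f(u),f(w)$ equals $v_{i+1}$, in which case the block leaves both $uw$ and $f(u)f(w)$ untouched; or exactly one of them, say $f(u)$, equals $v_{i+1}$, in which case $u$ is the unique endpoint of $uw$ lying in $f^{-1}(v_{i+1})$, and the single switch $(u,\gamma_{i+1})$ alters $uw$ by exactly the same element $\gamma_{i+1}$ that alters $f(u)f(w)$. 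The case $f(u)=f(w)=v_{i+1}$ is ruled out by the loop-free convention, just as above. Hence the edge/arc structure matches after the $(i+1)$st block.

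The main obstacle, then, is really a bookkeeping one: making sure that the simultaneous switches within a block commute (handled by independence plus Proposition~\ref{SwitchIndep}) and that the two possible cases for an edge/arc transform compatibly on the two sides of $f$. Once the inductive claim is established at $i=t$, setting $G''=(G')^{\mathcal{S}_G}$ yields $f\colon G''\to H'$, and therefore $g\circ f\colon G''\to F$ is an ordinary homomorphism with $G''\in[G]_\Gamma$, giving $G\xrightarrow[\Gamma]{} F$. $\Box$
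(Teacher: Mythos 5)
Your proposal is correct and follows essentially the same route as the paper's proof: pull the switching sequence for $H \leadsto H'$ back through the homomorphism into blocks $f^{-1}(v_i)$, use independence of these preimages together with Proposition~\ref{SwitchIndep} to make the blocks well-defined, and observe that each edge or arc of $G'$ is acted on by exactly the same group elements as its image, so the map remains a homomorphism into $H'$ and can then be composed. Your write-up is somewhat more explicit about the induction and the endpoint case analysis, but the underlying argument is identical.
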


\begin{proof}
Let $G^\prime \in [G]_\Gamma$ be such that $g: G^\prime \to H$ is a homomorphism,
and $H^\prime \in [H]_\Gamma$ be such that $h: H^\prime \to F$ is a homomorphism.
We show that there exists $G^{\prime\prime} \in [G]_\Gamma$ such that $(h \circ g): G^{\prime\prime} \to F$.

By hypothesis, there is a sequence of switching pairs
$\mathcal{S} = (v_1, \gamma_1), (v_2, $ $\gamma_2), \ldots, (v_t, \gamma_t)$
such that $H^{\mathcal{S}} = H^\prime$.
Let $G^{\prime\prime}$ be the $(m, n)$-mixed graph arising from applying 
$\gamma_{1}$ to all elements of $g^{-1}(v_1)$ in $V(G^\prime)$, then 
$\gamma_{2}$ to all elements of $g^{-1}(x_2)$ in $V(G^\prime)$,
and so on.  (Since $g$ is a homomorphism, each set  $g^{-1}(v_i)$ is independent in $G^\prime$. Hence by
Proposition \ref{SwitchIndep} the order in which the switches are applied to the elements of this set does not matter.)

We claim that $(h \circ g): G^{\prime\prime} \to H^\prime$ is a homomorphism.
Suppose $xy \in E_i(G^\prime)$ and $xy \in E_j(G^{\prime\prime})$.  
We know $g(x)g(y) \in E_i(H)$.
The edge $xy$ is acted on by exactly the same permutations, in exactly the same order, 
in the formation of $G^{\prime\prime}$ from $G^\prime$ as is the edge $g(x)g(y)$ in the formation of $H^\prime$ from $H$.
Therefore $g(x)g(y) \in E_j(H^\prime)$ and, since $h$ is a homomorphism, $h(g(x))h(g(y)) \in E_j(F)$.
The argument is similar when $xy \in A_i(G')$.
Thus, $h \circ g$ is a homomorphism of $G^{\prime\prime}$ to $F$. 
\end{proof}

We conclude this section by establishing some other useful properties of $\Gamma$-switchable homomorphisms.

\begin{theorem} 
Let $G$ and $H$ be  $(m, n)$-mixed graphs, and let $\Gamma$ be a $(m, n)$-switching group.  
If $G \to H$, then
\begin{enumerate}
\item  $G^\prime \xrightarrow[\Gamma]{} H$ for any $G^\prime \in [G]_\Gamma$; 
\item for any $H^\prime \in [H]_\Gamma$ there exists $G' \in [G]_\Gamma$ such that $G' \to H'$. 
\end{enumerate}
\label{SwitchToHom}
\end{theorem}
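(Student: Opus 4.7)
The plan is to dispatch the two parts separately: part 1 is essentially a tautology from the definitions, while part 2 requires a pullback construction closely modelled on the proof of Proposition~\ref{compose}.

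For part 1, the key observation is that, since $\sim_\Gamma$ is an equivalence relation, $G \in [G']_\Gamma$ whenever $G' \in [G]_\Gamma$. The hypothesised homomorphism $G \to H$ therefore directly witnesses $G' \xrightarrow[\Gamma]{} H$: some member of $[G']_\Gamma$, namely $G$ itself, admits a homomorphism to $H$.

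For part 2, let $f: G \to H$ be the given homomorphism and fix a switching sequence $\mathcal{S} = (v_1, \gamma_1), (v_2, \gamma_2), \ldots, (v_t, \gamma_t)$ with $H^{\mathcal{S}} = H'$. The plan is to pull $\mathcal{S}$ back through $f$: build $G'$ from $G$ by processing $\mathcal{S}$ in order, at step $i$ switching every vertex of the fibre $f^{-1}(v_i)$ by $\gamma_i$. Since both $G$ and $H$ are simple, adjacent vertices of $G$ have distinct images under $f$, so each fibre $f^{-1}(v_i)$ is independent in $G$; Proposition~\ref{SwitchIndep} then guarantees that the order in which one switches the elements of a given fibre is immaterial. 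By construction $G' \in [G]_\Gamma$, and it remains to check that $f: G' \to H'$ is still a homomorphism.

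The main point of substance, and essentially the only thing to verify, is the following bookkeeping, which is identical in spirit to that used in Proposition~\ref{compose}: for any edge or arc $xy$ of $G$, the $i$-th batch of switches acts on $xy$ during the construction of $G'$ precisely when exactly one of $x, y$ lies in $f^{-1}(v_i)$, which holds precisely when the $i$-th step of $\mathcal{S}$ acts on $f(x)f(y)$ during the construction of $H'$. Hence $xy$ and $f(x)f(y)$ undergo the same sequence of group actions in the same order, so their colours (and, in the arc case, orientations) evolve in lockstep. Since $xy$ and $f(x)f(y)$ initially agree in colour and orientation (because $f$ is a homomorphism $G \to H$), they still agree after the sequence is applied, yielding $f: G' \to H'$ as required.
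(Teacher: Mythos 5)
Your proposal is correct and follows essentially the same route as the paper: part 1 is read off from the definition of a $\Gamma$-switchable homomorphism (using $[G']_\Gamma = [G]_\Gamma$), and part 2 pulls the switching sequence for $H'$ back through the homomorphism, switching each fibre $f^{-1}(v_i)$ by $\gamma_i$ and invoking Proposition~\ref{SwitchIndep} for independence of the fibres, exactly as the paper does by reference to the argument in Proposition~\ref{compose}. Your explicit bookkeeping that an edge $xy$ and its image $f(x)f(y)$ are acted on by the same switches in the same order is the same verification the paper leaves implicit.
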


\begin{proof}
Statement 1 is immediate from the definition of a $\Gamma$-switchable homomorphism, Observation \ref{Obs3.1} and Proposition \ref{compose}.

We now prove statement 2.
Let $h: G \to H$ be a homomorphism.
Since $H^\prime \in [H]_\Gamma$, there exists a sequence of switching pairs
$\mathcal{S}= (v_1, \gamma_1), (v_2, $ $\gamma_2), \ldots, (v_t, \gamma_t)$
such that $H^{\mathcal{S}} = H^\prime$.
The result now follows (as in Proposition \ref{compose}) by
letting $G^\prime$ be the $(m, n)$-mixed graph arising from applying 
$\gamma_{1}$ to all elements of $h^{-1}(x_1)$, then 
$\gamma_{2}$ to all elements of $h^{-1}(x_2)$,
and so on.  The mapping $h: G^\prime \to H^\prime$ is a homomorphism.
\end{proof}

\begin{corollary}
Let $G$ and $H$ be  $(m, n)$-mixed graphs, and let $\Gamma$ be a $(m, n)$-switching group. 
If $G \xrightarrow[\Gamma]{} H$, then for all  $G' \in [G]$ and $H' \in [H]$ we have $G' \xrightarrow[\Gamma]{} H'$.
\end{corollary}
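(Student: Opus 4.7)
The plan is to chain the two parts of Theorem \ref{SwitchToHom} together, using the fact that $\sim_\Gamma$ is an equivalence relation (so membership in $[G]_\Gamma$ yields the same class regardless of the representative chosen).

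First I would unpack the hypothesis. Since $G \xrightarrow[\Gamma]{} H$, the definition supplies some $G_0 \in [G]_\Gamma$ together with an (ordinary) homomorphism $G_0 \to H$. With this ordinary homomorphism in hand, I would invoke part~2 of Theorem \ref{SwitchToHom} with source $G_0$, target $H$, and the chosen $H' \in [H]_\Gamma$. That yields some $G_0^\ast \in [G_0]_\Gamma$ with an ordinary homomorphism $G_0^\ast \to H'$. Because $\sim_\Gamma$ is an equivalence relation and $G_0 \in [G]_\Gamma$, we have $[G_0]_\Gamma = [G]_\Gamma$, so in fact $G_0^\ast \in [G]_\Gamma$.

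Now I would apply part~1 of Theorem \ref{SwitchToHom} to the homomorphism $G_0^\ast \to H'$: it guarantees that every member of $[G_0^\ast]_\Gamma$ admits a $\Gamma$-switchable homomorphism to $H'$. Using $[G_0^\ast]_\Gamma = [G]_\Gamma$ once more, and noting that the arbitrarily chosen $G'$ lies in $[G]_\Gamma$, we conclude $G' \xrightarrow[\Gamma]{} H'$, which is the desired statement.

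The argument is essentially bookkeeping, not a new idea, so I do not expect a genuine obstacle. The only thing that needs a little care is tracking the equivalence classes: part~2 of Theorem \ref{SwitchToHom} hands back a graph that is \emph{a priori} only in $[G_0]_\Gamma$ rather than in $[G]_\Gamma$, and part~1 starts from an ordinary homomorphism out of a specific graph rather than from the original $G$. Both gaps are closed by the single observation that equivalence classes of $\sim_\Gamma$ are honestly determined by any representative. Once that is made explicit, the corollary follows in one paragraph.
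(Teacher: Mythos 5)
Your argument is correct, and it is essentially the route the paper intends: the statement is presented as an immediate corollary of Theorem \ref{SwitchToHom} with no written proof, and your chaining of part~2 (to move from $H$ to $H'$) followed by part~1 (to move from the resulting representative to an arbitrary $G'\in[G]_\Gamma$), with the equivalence-class bookkeeping made explicit, is exactly the intended derivation.
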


Notice that the statement: \emph{``if $G \to H$ is a homomorphism and $G' \in [G]_\Gamma$, then there exists $H' \in [H]_\Gamma$ such that $G' \to H'$.''} is false. 
Consider the 2-edge coloured graph $G$ consisting of four vertices and two edges, so that each vertex has degree 1, and each edge is in $E_0$. 
Then, $G \to H$, where $H$ consists of two vertices joined by an edge in $E_0$.   
Let $\Gamma = S_2$, and $G^\prime$ be obtained from $G$ by switching so that one of the edges is in $E_1$.
Then $G^\prime  \in [G]_\Gamma$, but there is no $H' \in [H]_\Gamma$ such that $G' \to H'$.

%
%
%

\section{Switching with respect to an Abelian group}
\label{Abelian}

In this section we show that results of Brewster and Graves \cite{BG} about switching $m$-edge-coloured graphs with respect to a cyclic group extend to switching $(m, n)$-mixed graphs with respect to an Abelian group.

If the $(m, n)$-switching group $\Gamma$ is Abelian, then the order in which switches are applied at the vertices of the $(m, n)$-mixed graph $G$ does not matter.  
We may therefore assume that there is exactly one switch (which may be the with respect to the identity) 
applied at each vertex of $G$ and regard all switches as being applied simultaneously.
Thus the question of deciding whether $(m, n)$-mixed graphs $G$ and $H$ are $\Gamma$-switch equivalent is a finite question.
By contrast, when $\Gamma$ is non-Abelian it is not clear how many switches are required to transform $G$ into $H$, if it is possible to do so.
  
%

Let $\Gamma$ be an Abelian $(m, n)$-switching group, and $G$ be a $(m, n)$-mixed graph. 
The \emph{$\Gamma$-switching graph of $G$} is the $(m, n)$-mixed graph $P_\Gamma(G)$ with vertex set 
$V(P_\Gamma(G)) = V(G) \times \Gamma$, and edges and arcs defined as follows:
\begin{enumerate}
\item If $xy \in E_i(G)$ and $\gamma_1 = (\phi_1, \psi_1, \pi_1)$ and $\gamma_2 \in (\phi_2, \psi_2, \pi_2)$ are in $\Gamma$, then $(x, \gamma_1)(y, \gamma_2) \in E_{r}(P_\Gamma(G))$, where $r = \phi_1(i)\phi_2(i)$.
\item If $xy \in A_j(G)$ and $\gamma_1 = (\phi_1, \psi_1, \pi_1)$ and $\gamma_2 = (\phi_2, \psi_2, \pi_2)$ are in $\Gamma$, where
$\pi_1 = (p_{1,1}, p_{1,2}, \ldots, p_{1, n})$ and $\pi_2 = (p_{2,1}, p_{2,2}, \ldots, p_{2, n})$, then 
\begin{enumerate}
\item if $p_{1,j} = p_{2,j}$, then $(x, \gamma_1)(y, \gamma_2) \in E_{s}(P_\Gamma(G))$, where $s = \psi_1(j)\psi_2(j)$, and 
\item  if $p_{1,j} \neq p_{2,j}$, then $(y, \gamma_2)(x, \gamma_1) \in E_{s}(P_\Gamma(G))$, where $s = \psi_1(j)\psi_2(j)$.
\end{enumerate}
\end{enumerate}

\begin{observation}
Let $G$ and $H$ be $(m, n)$-mixed graphs and let $\Gamma$ be an Abelian $(m, n)$-switching group.
If $G$ is a subgraph  $H$, then $P_\Gamma(G)$ is a subgraph of $P_\Gamma(H)$. 
\end{observation}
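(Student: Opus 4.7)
The plan is to verify the observation directly from the definition of $P_\Gamma$, by showing that the construction is "monotone" on the three types of objects that appear in an $(m,n)$-mixed graph: vertices, coloured edges, and coloured arcs. The key point is that the clauses (1), (2a), (2b) defining the edges and arcs of $P_\Gamma(G)$ depend only on a single edge or arc of $G$ (together with a pair of elements of $\Gamma$), so the contribution of any given edge or arc $xy$ of $G$ to $P_\Gamma(G)$ is identical, colour by colour and orientation by orientation, to its contribution to $P_\Gamma(H)$.

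First I would handle vertices: since $G$ is a subgraph of $H$ we have $V(G)\subseteq V(H)$, and therefore
\[
V(P_\Gamma(G))=V(G)\times\Gamma\subseteq V(H)\times\Gamma=V(P_\Gamma(H)).
\]
Next, for the edges, suppose $(x,\gamma_1)(y,\gamma_2)\in E_r(P_\Gamma(G))$. By clause (1) this means that there is some $i$ with $xy\in E_i(G)$ and $r=\phi_1(i)\phi_2(i)$, where $\gamma_k=(\phi_k,\psi_k,\pi_k)$. Because $G$ is a subgraph of $H$, we have $xy\in E_i(H)$, and applying clause (1) in $H$ with the same pair $(\gamma_1,\gamma_2)$ yields $(x,\gamma_1)(y,\gamma_2)\in E_r(P_\Gamma(H))$ with the same value of $r$. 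For arcs, the argument is the same with clauses (2a) and (2b): if $xy\in A_j(G)$ then $xy\in A_j(H)$, and the case analysis on whether $p_{1,j}=p_{2,j}$ or $p_{1,j}\ne p_{2,j}$ produces an arc in $P_\Gamma(H)$ of the same colour $s=\psi_1(j)\psi_2(j)$ and in the same orientation as the corresponding arc of $P_\Gamma(G)$. This exhausts all the edges and arcs of $P_\Gamma(G)$, so $P_\Gamma(G)$ is a subgraph of $P_\Gamma(H)$.

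There is no real obstacle here; the result is essentially a bookkeeping consequence of the fact that the defining clauses of $P_\Gamma$ are \emph{local} to single edges and arcs of the input graph. The only point worth noting is that the statement implicitly uses the convention that a subgraph of an $(m,n)$-mixed graph preserves the colour class and the orientation of each edge and arc it inherits; under that convention the verification above is immediate.
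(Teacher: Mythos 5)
Your proof is correct, and it is exactly the routine verification the paper has in mind: the paper states this as an unproved observation precisely because the defining clauses of $P_\Gamma$ act edge-by-edge and arc-by-arc, so containment of $G$ in $H$ immediately yields containment of $P_\Gamma(G)$ in $P_\Gamma(H)$. Your note about the subgraph convention (preserving colour classes and orientations) is the right caveat and consistent with the paper's usage.
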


We now show that, as in the cases of oriented graphs and the pushing operation, or $m$-edge coloured graphs and switching with respect to a cyclic group, the $\Gamma$-switching graph gives a representation of $[G]_\Gamma$. 

Let $\Gamma$ be an Abelian $(m, n)$-switching group.
Let $\mathcal{S} = (v_1, \gamma_1), (v_2, \gamma_2), \ldots, (v_t, \gamma_t)$ be a sequence of
switching pairs such that each vertex of the $(m, n)$-mixed graph $G$ is the first component of exactly one 
ordered pair in $\mathcal{S}$.
Then, there is a natural isomorphism of $G^{\mathcal{S}}$ to 
the subgraph of $P_\Gamma(G)$ induced by $\{(v_i, \gamma_i): 1 \leq i \leq t\}$.
In particular, the subgraph of $P_\Gamma(G)$ induced by 
$\{(x, e): x \in V(G)\}$ is isomorphic to $G$.  

Define a \emph{transversal subgraph} of $P_\Gamma(G)$ to be a subgraph of $P_\Gamma(G)$ induced by a transversal of the collection of the $|V(G)|$ sets $S_x = \{(x, \gamma): \gamma \in \Gamma\}, x \in V(G)\}$.

\begin{observation}
Let $G$ be a $(m, n)$-mixed graph.
Then $G' \in [G]_\Gamma$ if and only if $G'$ is a transversal subgraph of $P_\Gamma(G)$.
\end{observation}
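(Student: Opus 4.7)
The plan is to reduce the observation to the isomorphism remark stated immediately before it, using the fact that $\Gamma$ is Abelian (so every element of $[G]_\Gamma$ can be obtained by applying exactly one switching pair at each vertex).

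For the forward direction, I take $G' \in [G]_\Gamma$. By the Abelian assumption discussed at the start of the section, I may choose a sequence of switching pairs $\mathcal{S} = (v_1, \gamma_1), (v_2, \gamma_2), \ldots, (v_t, \gamma_t)$ realizing $G' = G^{\mathcal{S}}$ in which each vertex of $G$ occurs as a first coordinate exactly once. By the remark preceding the observation, $G^{\mathcal{S}}$ is naturally isomorphic to the subgraph of $P_\Gamma(G)$ induced by $\{(v_i, \gamma_i) : 1 \leq i \leq t\}$. Because each $v \in V(G)$ appears exactly once among the $v_i$, this induced subgraph picks one vertex from each $S_v$ and is therefore a transversal subgraph of $P_\Gamma(G)$.

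For the reverse direction, suppose $G'$ is a transversal subgraph of $P_\Gamma(G)$. Then $V(G') = \{(v, \gamma_v) : v \in V(G)\}$ for some assignment $v \mapsto \gamma_v \in \Gamma$. Let $\mathcal{S}$ be the sequence of switching pairs $(v, \gamma_v)$, listed in any order (since $\Gamma$ is Abelian, the order is immaterial). Again by the remark, $G^{\mathcal{S}}$ is naturally isomorphic to the subgraph of $P_\Gamma(G)$ induced by $\{(v, \gamma_v) : v \in V(G)\}$, which is exactly $G'$. Hence $G' \in [G]_\Gamma$.

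The only real substance in the argument is the natural isomorphism from $G^{\mathcal{S}}$ to the corresponding induced subgraph of $P_\Gamma(G)$, and this has already been stated before the observation; I would only need to remark that this is precisely the content of the construction of $P_\Gamma(G)$, where the recipe for the colour of $(x,\gamma_1)(y,\gamma_2)$ and, for arcs, its orientation, mirrors the effect on the edge or arc $xy$ of performing switch $\gamma_1$ at $x$ and switch $\gamma_2$ at $y$. The main potential obstacle is bookkeeping in the arc case (items 2(a) and 2(b) in the definition of $P_\Gamma(G)$): I would verify that an arc $xy \in A_j(G)$ becomes an arc from $x$ to $y$ (respectively, from $y$ to $x$) in $G^{\mathcal{S}}$ exactly when $p_{1,j} = p_{2,j}$ (respectively, $p_{1,j} \neq p_{2,j}$), and that its new colour is $\psi_1(j)\psi_2(j)$. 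Given this, both directions of the observation follow immediately.
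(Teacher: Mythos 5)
Your argument is correct and is essentially the paper's own (implicit) justification: the paper states this observation without proof, as an immediate consequence of the preceding remark that $G^{\mathcal{S}}$ is naturally isomorphic to the subgraph of $P_\Gamma(G)$ induced by $\{(v_i,\gamma_i)\}$, combined with the earlier point that for Abelian $\Gamma$ one may assume exactly one switch per vertex. Your two directions simply make that reduction explicit, so no further commentary is needed.
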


The $\Gamma$-switching graph, $P_\Gamma(G)$,  transforms questions about $\Gamma$-switchable homomorphisms into questions about (ordinary) homomorphisms.

\begin{lemma}
Let $G$ be a $(m, n)$-mixed graph and let $\Gamma$ be an Abelian $(m, n)$-switching group.
Then  $P_\Gamma(G) \xrightarrow[\Gamma]{}  G$, and at least one such mapping is onto.
\label{P_GammaToG}
\end{lemma}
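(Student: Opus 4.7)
The plan is to exhibit an explicit element $Q \in [P_\Gamma(G)]_\Gamma$ together with a surjective homomorphism $f \colon Q \to G$. The natural choice for $f$ is the projection $f((x, \gamma)) = x$, and to make this a homomorphism I will take $Q = (P_\Gamma(G))^{\mathcal{S}}$, where $\mathcal{S}$ applies the switch $\gamma^{-1}$ at each vertex $(x, \gamma)$ of $P_\Gamma(G)$. Since $\Gamma$ is Abelian, the order in which these switches are applied is immaterial, so $Q$ is well defined. The picture behind this choice is that by the construction of $P_\Gamma(G)$ the edge or arc between $(x, \gamma_1)$ and $(y, \gamma_2)$ already records the combined effect of applying $\gamma_1$ at $x$ and $\gamma_2$ at $y$ to the corresponding edge or arc of $G$, so applying the inverse switches at both endpoints should undo this and recover the original colour and orientation.

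To verify that $f \colon Q \to G$ is a homomorphism I will treat edges and arcs separately. For an edge $xy \in E_i(G)$, the corresponding edge $(x, \gamma_1)(y, \gamma_2)$ of $P_\Gamma(G)$ has colour $\phi_1 \phi_2(i)$, where $\gamma_k = (\phi_k, \psi_k, \pi_k)$; switching at the two endpoints by $\gamma_1^{-1}$ and $\gamma_2^{-1}$ acts on the colour by $\phi_1^{-1}$ and $\phi_2^{-1}$, and the Abelian-ness of $\Gamma$ (which forces its image in $S_m$ to be Abelian) gives $\phi_1^{-1}\phi_2^{-1}\phi_1\phi_2 = e$, so the resulting edge in $Q$ again has colour $i$. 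For an arc $xy \in A_j(G)$, the construction gives an arc in $P_\Gamma(G)$ of colour $\psi_1\psi_2(j)$ between $(x, \gamma_1)$ and $(y, \gamma_2)$, with direction $(x,\gamma_1) \to (y, \gamma_2)$ precisely when $p_{1,j} = p_{2,j}$; after applying the two inverse switches, an analogous cancellation will force the final colour to be $j$ and the direction to be $(x,\gamma_1) \to (y, \gamma_2)$. Surjectivity of $f$ is then immediate, because for every $x \in V(G)$ the vertex $(x, e)$ maps to $x$.

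The main technical obstacle is the arc case, specifically the orientation bookkeeping. The edge case uses only permutations in $S_m$, but an arc carries both a colour permutation and a reversal flag, and the interaction between these is governed by the non-trivial multiplication in the wreath product $S_2 \wr S_n$. I plan to unpack $\gamma_k^{-1}$ explicitly inside that wreath product and use repeatedly that elements of the Abelian group $\Gamma$ commute, so that the two reversal contributions coming from the inverse switches exactly cancel the single net reversal baked into the definition of $P_\Gamma(G)$ (namely, reversal iff $p_{1,j} \neq p_{2,j}$). Once this cancellation is verified, the rest of the proof reduces to the routine identifications sketched above, and the onto-ness claim follows with no further work.
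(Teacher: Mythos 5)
Your proposal is correct and follows essentially the same route as the paper's own proof: switch at each vertex $(x,\gamma)$ of $P_\Gamma(G)$ with respect to $\gamma^{-1}$, then project onto the first coordinate, using commutativity to cancel $\phi_1\phi_2$ against $\phi_1^{-1}\phi_2^{-1}$ (and noting $(x,e)\mapsto x$ for surjectivity). You are in fact somewhat more careful than the paper, which verifies only the edge case explicitly and leaves the arc colour and orientation cancellation entirely implicit.
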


\begin{proof}
Note that,  for each $w \in V(G)$, the set of vertices $S_w = \{(w, \gamma): \gamma \in \Gamma\}$ is 
independent in $P_\Gamma(G)$.
At each vertex $(w, \gamma) \in V(P_\Gamma(G))$, switch with respect to $\gamma^{-1}$.

Suppose $xy \in E_i(G)$.
After switching, the edge $(x, \alpha)(y, \beta)$ belongs to  
$E_{\alpha\alpha^{-1}\beta\beta^{-1}(i)} = E_i$.
Thus, after switching as above, the mapping that sends all elements of $S_w$ to $w$ for each $w \in V$ is 
an onto $\Gamma$-switchable homomorphism $P_\Gamma(G) \xrightarrow[\Gamma]{}  G$.
\end{proof}

A \textit{$\Gamma$-switchable isomorphism} between $G$ and $H$ is an isomorphism $G'\cong H$, where $G'\in[G]_\Gamma$. If some such isomorphism exists we say \emph{$G$ is $\Gamma$-switchably isomorphic to $H$}, and write $G\cong_\Gamma H$. 

\begin{lemma}
Let $G,$ and $H$ be $(m, n)$-mixed graphs and let $\Gamma$ be an $(m, n)$-switching group. 
If  there are homomorphisms $G \xrightarrow[\Gamma]{} H$ and  $H \xrightarrow[\Gamma]{} G$, which are both onto (the relevant vertex set), 
then  $G \cong_\Gamma H$.
\label{SwitchIso}
\end{lemma}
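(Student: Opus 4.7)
The plan is to show that the given homomorphism $\phi\colon G_0\to H$ witnessing $G\xrightarrow[\Gamma]{}H$, where $G_0\in[G]_\Gamma$, is already an isomorphism of $(m,n)$-mixed graphs; this will immediately give $G\cong_\Gamma H$ from the definition of $\cong_\Gamma$. Let $\psi\colon H_0\to G$ be the corresponding onto homomorphism witnessing $H\xrightarrow[\Gamma]{}G$, with $H_0\in[H]_\Gamma$.

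I would first observe that switching never alters the underlying undirected graph, so $|V(G_0)|=|V(G)|$, $|E(G_0)|=|E(G)|$, and $|A(G_0)|=|A(G)|$ (although the distribution of edges and arcs among colour classes, and the orientations of the arcs, may change); the analogous equalities hold for $H_0$ versus $H$. Onto-ness of $\phi$ then gives $|V(G)|\geq|V(H)|$, and onto-ness of $\psi$ gives the reverse inequality, so both $\phi$ and $\psi$ are bijections between finite vertex sets of equal cardinality.

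Now, because $\phi$ is a colour-preserving homomorphism and a vertex-bijection, it injects $E_i(G_0)$ into $E_i(H)$ for every colour $i$. Summing over $i$ gives $|E(G)|=|E(G_0)|\leq|E(H)|$, and the symmetric argument applied to $\psi$ gives $|E(H)|\leq|E(G)|$, hence $|E(G)|=|E(H)|$. Combined with the colour-wise inequalities $|E_i(G_0)|\leq|E_i(H)|$, this forces equality $|E_i(G_0)|=|E_i(H)|$ for every $i$, so $\phi$ restricts to a bijection $E_i(G_0)\to E_i(H)$ for each $i$. The identical argument applied to arcs, using that a mixed-graph homomorphism preserves orientation, yields bijections $A_j(G_0)\to A_j(H)$ preserving both colour and direction.

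Thus $\phi$ is a vertex-bijection inducing colour- and direction-preserving bijections between the corresponding edge and arc sets, which is precisely the definition of an isomorphism of $(m,n)$-mixed graphs. Since $G_0\in[G]_\Gamma$, this is the desired conclusion $G\cong_\Gamma H$. The thing to be careful with is the bookkeeping: switching can permute the colour classes and flip arc orientations, but it preserves the total counts $|E(G)|$ and $|A(G)|$, and it is this invariance that makes all the colour-wise inequalities collapse simultaneously to equalities. Notably, one does not need to produce a single switched pair on which $\phi$ and $\psi$ both serve as homomorphisms---the counting argument upgrades $\phi$ from a homomorphism to an isomorphism on its own.
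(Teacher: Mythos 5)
Your proposal is correct and follows essentially the same route as the paper's proof: both use that switching preserves the underlying graph, deduce $|V(G)|=|V(H)|$ from the two onto homomorphisms, and then run a counting argument on edges and arcs to upgrade the onto homomorphism to an isomorphism. Your version is slightly more detailed in tracking the colour classes $E_i$ and $A_j$ individually, but the idea is identical.
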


\begin{proof}
Let $G^\prime \in [G]_\Gamma$ be such that $g: G^\prime \to H$ is onto, and
let $H^\prime \in [H]_\Gamma$ be such that $h: H^\prime \to G$ is onto.
Clearly $|V(G)| = |V(H)|$.
Thus, since $g$ and $h$ are onto, each edge or arc of $H$ is the image under $g$ of at most one edge or arc of $G^\prime$,
and similarly each edge or arc of $G$ is the image under $h$ of at most one edge or arc of $H^\prime$.
Therefore, the graphs $\mathit{underlying}(G)$ and $\mathit{underlying}(H)$ have the same number of edges.
It follows that both $g$ and $h$ are isomorphisms.
\end{proof}

\begin{theorem}
Let $G$ and $H$ be  $(m, n)$-mixed graphs and let $\Gamma$ be a $(m, n)$-switching group.
Then,
\begin{enumerate}
\item $G\cong_\Gamma H$ if and only if $P_\Gamma(G)\cong P_\Gamma(H)$;
\item $G\xrightarrow[\Gamma]{} H$ if and only if $P_\Gamma(G)\rightarrow P_\Gamma(H)$; 
\item $G\xrightarrow[\Gamma]{} H$ if and only if $G\rightarrow P_\Gamma(H)$.
\end{enumerate}
\label{BGThm} \label{MapSwitchGraphs} \label{HomCor}
\end{theorem}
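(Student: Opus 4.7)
The plan is to prove the three biconditionals in the order (3), (2), (1), using the earlier parts to support the later ones; the main obstacle will be the backward direction of (1).

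For (3), the direction $G \to P_\Gamma(H) \Rightarrow G \xrightarrow[\Gamma]{} H$ follows by composition: Observation \ref{Obs3.1}(2) gives $G \xrightarrow[\Gamma]{} P_\Gamma(H)$, Lemma \ref{P_GammaToG} gives $P_\Gamma(H) \xrightarrow[\Gamma]{} H$, and Proposition \ref{compose} combines them. For the forward direction, take $f : G' \to H$ with $G' \in [G]_\Gamma$. Because $\Gamma$ is Abelian, a single switch $\delta_v \in \Gamma$ at each $v \in V(G)$ produces $G'$. I would define $g(v) = (f(v), \delta_v^{-1})$ and verify it is a homomorphism $G \to P_\Gamma(H)$: for $xy \in E_i(G)$, the colour of $xy$ in $G'$ is $\phi_{\delta_x}\phi_{\delta_y}(i)$, so $f(x)f(y)$ has this colour in $H$, and hence $g(x)g(y)$ has colour $\phi_{\delta_x}^{-1}\phi_{\delta_y}^{-1}\phi_{\delta_x}\phi_{\delta_y}(i) = i$ in $P_\Gamma(H)$ by commutativity. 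The arc case is analogous, with direction flips tracked via the wreath-product formula.

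For (2), the $\Leftarrow$ direction follows by composing the natural inclusion $G \hookrightarrow P_\Gamma(G)$ with the given map to obtain $G \to P_\Gamma(H)$, then invoking (3). For $\Rightarrow$, I lift the construction from (3): the map $F(v, \gamma) = (f(v), \gamma \delta_v^{-1})$ is a homomorphism $P_\Gamma(G) \to P_\Gamma(H)$ by the same commutativity calculation.

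For (1) $\Rightarrow$, given an isomorphism $\alpha : G' \to H$ with $G' \in [G]_\Gamma$, first observe $(v, \gamma) \mapsto (\alpha(v), \gamma)$ is an isomorphism $P_\Gamma(G') \to P_\Gamma(H)$. Second, applying a single switch at $v_0$ by $\delta \in \Gamma$ to $G$ lifts to an isomorphism $P_\Gamma(G) \to P_\Gamma(G^{(v_0, \delta)})$ defined by $(v_0, \gamma) \mapsto (v_0, \gamma \delta^{-1})$ while fixing vertices outside $S_{v_0}$, verified once more by commutativity. Iterating over all switches gives $P_\Gamma(G) \cong P_\Gamma(G')$, and combining with the first step yields $P_\Gamma(G) \cong P_\Gamma(H)$.

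For (1) $\Leftarrow$, the hard part, suppose $\Phi : P_\Gamma(G) \to P_\Gamma(H)$ is an isomorphism. The naive attempt of sending $v$ to the first coordinate of $\Phi(v, e)$ need not be injective, so I would use Hall's theorem. Form the bipartite graph on $V(G) \cup V(H)$ with $v \sim w$ iff $\Phi(S_v) \cap S_w \neq \emptyset$. For any $S \subseteq V(G)$, the set $\Phi\bigl(\bigcup_{v \in S} S_v\bigr)$ has $|S| \cdot |\Gamma|$ vertices and lies inside $\bigcup_{w \in N(S)} S_w$, which has $|N(S)| \cdot |\Gamma|$ vertices, so $|S| \leq |N(S)|$ and Hall's condition holds. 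A matching $\mu : V(G) \to V(H)$ then determines choices $\gamma_v$ with $\Phi(v, \gamma_v) \in S_{\mu(v)}$, and the transversal subgraphs induced by $\{(v, \gamma_v) : v \in V(G)\}$ and $\{\Phi(v, \gamma_v) : v \in V(G)\}$ correspond respectively to some $G' \in [G]_\Gamma$ and $H' \in [H]_\Gamma$, with $\Phi$ restricting to an isomorphism $G' \cong H'$. Finally, applying the inverse of each switch that produced $H'$ from $H$, transported back through this isomorphism simultaneously at $v$ and $\mu(v)$, yields some $G'' \in [G]_\Gamma$ with $G'' \cong H$, hence $G \cong_\Gamma H$.
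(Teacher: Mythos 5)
Your proposal is correct, and parts (2), (3), and the forward direction of (1) follow essentially the paper's route, only with the compositions unrolled into explicit formulas such as $g(v)=(f(v),\delta_v^{-1})$ and $F(v,\gamma)=(f(v),\gamma\delta_v^{-1})$; the paper instead factors these through the isomorphism $P_\Gamma(G)\cong P_\Gamma(G')$ and the map $(w,\phi)\mapsto(g(w),\phi)$, and derives (3) from (2) together with Lemma \ref{P_GammaToG} and Proposition \ref{compose}. The genuine divergence is in the backward direction of (1). The paper composes the embedding $G\to P_\Gamma(G)$, the isomorphism $P_\Gamma(G)\to P_\Gamma(H)$, and the onto homomorphism $P_\Gamma(H)\to H$ of Lemma \ref{P_GammaToG}, asserts that this composition (and its mirror image) is an \emph{onto} switchable homomorphism, and invokes Lemma \ref{SwitchIso}. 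Your Hall's-theorem argument replaces this entirely: you extract a perfect matching between the fibres $S_v$ of $P_\Gamma(G)$ and the fibres $S_w$ of $P_\Gamma(H)$, obtain matched transversal subgraphs $G'\in[G]_\Gamma$ and $H'\in[H]_\Gamma$ with $\Phi$ restricting to an isomorphism $G'\cong H'$, and then pull the switches relating $H'$ to $H$ back through that isomorphism. This is heavier machinery, but it buys something real: the surjectivity of the paper's composition onto $V(H)$ is exactly the assertion that the $\Phi$-image of a transversal of $P_\Gamma(G)$ meets every fibre $S_w$ of $P_\Gamma(H)$, which your Hall condition proves rather than assumes. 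So your route is longer and trades the clean appeal to Lemma \ref{SwitchIso} for an explicit system-of-distinct-representatives construction, but it is more self-contained and closes a step the paper leaves implicit.
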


\begin{proof}
We first prove statement 1.
Suppose $P_\Gamma(G) \cong P_\Gamma(H)$.
A switchable homomorphism of $G$ onto $H$ is obtained by composing
an embedding $G \to P_\Gamma(G)$,
an isomorphism $P_\Gamma(G) \to P_\Gamma(H)$, 
and an onto homomorphism 
$P_\Gamma(H) \to H$.
Similarly, there is a switchable homomorphism of $H$ onto $G$.
Lemma \ref{SwitchIso} now implies that $G \cong_\Gamma H$.

For the proof of the converse, suppose $G \cong_\Gamma H$.
Then there exists a sequence $\mathcal{S} = (v_1, \gamma_1), (v_2, \gamma_2), \ldots, (v_t, \gamma_t)$  of
switching pairs such that each vertex of the $(m, n)$-mixed graph $G$ is the first component of exactly one 
ordered pair in $\mathcal{S}$ and $G^\mathcal{S} \cong H$.

Define $f: V(P_\Gamma(G)) \to V(P_\Gamma(H))$
by $f((v_i, \alpha)) = (v_i, \alpha \gamma_i^{-1})$.
Then $f$ is a bijection.
Since $P_\Gamma(G)$ and $P_\Gamma(H)$ have the same number of edges,
it suffices to prove that $f$ is a homomorphism.

Suppose first that $(v_j, \alpha)(v_k, \beta) \in E_i(P_\Gamma(G))$.
Then $v_jv_k \in E_{\alpha^{-1} \beta^{-1}(i)}(G)$,
so that $v_jv_k \in E_{\gamma_i \gamma_j \alpha^{-1} \beta^{-1}(i)}(H)$.
Thus,
$f((v_j, \alpha)) f((v_k, \beta)) = 
(v_j, \alpha  \gamma_j^{-1})(v_k, \beta \gamma_k^{-1}) \in E_\ell(P_\Gamma(H))$,
where $\ell = \gamma_j \gamma_k$ $\alpha^{-1} \beta^{-1} \alpha \gamma_j^{-1} \beta  \gamma_k^{-1}(i) = i$.

The argument is similar (though notationally more cumbersome) when $(v_j, \alpha)(v_k, \beta) \in A_i(P_\Gamma(G))$.
This completes the proof of statement 1.

We now prove statement 2.
Suppose $G  \xrightarrow[\Gamma]{}  H$, and let  
$G^\prime \in [G]_\Gamma$ be such that 
there is a homomorphism $g: G^\prime \to H$.
Since $P_\Gamma(G) \cong P_\Gamma(G^\prime)$ by statement 1, 
it is enough to show $P_\Gamma(G^\prime) \to P_\Gamma(H)$.
Define $f: V(P_\Gamma(G^\prime)) \to V(P_\Gamma(H))$ by 
$f((w, \phi)) = (g(w), \phi)$.

Suppose $(u, \alpha)(v, \beta) \in E_j(P_\Gamma(G^\prime))$.
Then $uv \in E_i(G^\prime)$, where $i = \alpha^{-1}\beta^{-1}(j)$.
Since $g$ is a homomorphism, 
$g(u)g(v) \in E_i(H)$.
Thus,  $f((u, \alpha))f((v, \beta)) \in E_j(P_\Gamma(H))$.

The argument is the same when $(u, \alpha)(v, \beta) \in A_j(P_\Gamma(G^\prime))$.

For the proof of the converse, suppose $P_\Gamma(G) \to P_\Gamma(H)$.
Then, since $G \to P_\Gamma(G)$ and $P_\Gamma(H) \xrightarrow[\Gamma]{}  H$, we have 
$G \xrightarrow[\Gamma]{}  H$ by Proposition \ref{compose}.

We now prove statement 3.
Suppose $G \xrightarrow[\Gamma]{}  H$. Then, by statement 2, $P_\Gamma(G) \to P_\Gamma(H)$. 
The inclusion map gives $G \to P_\Gamma(G)$, so by Proposition \ref{compose}, we have $G \to P_\Gamma(H)$. 

For the proof of the converse, suppose $G \to P_\Gamma(H)$.
By Lemma \ref{P_GammaToG},  $P_\Gamma(H) \xrightarrow[\Gamma]{}  H$. 
The statement now follows from Observation \ref{Obs3.1} and Proposition \ref{compose}.
\end{proof}

\section{Switchable cores}

A $(m, n)$-mixed graph is a \emph{$\Gamma$-switchable core} if it admits no $\Gamma$-switchable homomorphism to a proper subgraph.  When $G$ is a $(1,0)$-mixed graph (i.e. a graph) or $(0, 1)$-mixed graph (i.e., a digraph), the definition coincides with the usual definition of the core of a graph.

As in the previous section, for Abelian $(m, n)$-switching groups $\Gamma$ we transform the problem of whether a graph is a $\Gamma$-switchable core into a problem that does not involve switching. The  $(m, n)$-mixed graph $P_\Gamma(G)$  is not in itself sufficient to accomplish this, but a special subgraph of $P_\Gamma(G)$ suffices.  The development below follows results implicit in \cite{BG} for $m$-edge-coloured graphs and switching with respect to cyclic groups.

For each vertex $x\in G$, define an equivalence relation $\equiv_x$ on $\Gamma$ by $\gamma_1 = (\phi_1, \psi_1, \pi_1) \equiv_{x}\gamma_2 = (\phi_2, \psi_2, \pi_2)$ if and only if 
\begin{enumerate}
\item $\phi_1(f) = \phi_2(f)$ for every edge $f$ incident with $x$,
\item $\psi_1(a) = \psi_2(a)$ for every arc $a$ incident with $x$, and
\item $\pi_1 = \pi_2$.
\end{enumerate}

Let $G$ be an $(m, n)$-mixed graph.
The $(m, n)$-mixed graph $S_\Gamma(G)$ is obtained from $P_\Gamma(G)$ by identifying vertices $(x, \gamma_1)$ and $(x, \gamma_2)$ whenever $\gamma_1 \equiv_x \gamma_2$ (note: necessarily the same $x$).
Then $S_\Gamma(G)$ is a transversal subgraph of $P_\Gamma(G)$.
Further, the mapping just defined is a homomorphism.
A $(m, n)$-mixed graph $H$ is a \emph{retract} of a $(m, n)$-mixed graph $G$ if it is an induced subgraph of $G$ and there is a homomorphism $G \to H$ that maps each vertex of $H$ to itself.

\begin{theorem}
Let $G$ be a $(m, n)$-mixed graph, and $\Gamma$ be an Abelian $(m, n)$-switching group.  Then $S_\Gamma(G)$ is a retract of $P_\Gamma(G)$.  \label{S_Gamma}
\end{theorem}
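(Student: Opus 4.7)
The plan is to exhibit an explicit retraction $r: P_\Gamma(G) \to S_\Gamma(G)$ that collapses each $\equiv_x$-equivalence class to a chosen representative. Doing so reduces the whole statement to a single \emph{twin} property for the equivalence $\equiv_x$: whenever $\gamma_1 \equiv_x \gamma_2$, the vertices $(x, \gamma_1)$ and $(x, \gamma_2)$ of $P_\Gamma(G)$ must have identical coloured adjacencies (and identical arc orientations) to every other vertex of $P_\Gamma(G)$.

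First, for each $x \in V(G)$ and each $\equiv_x$-class in $\Gamma$, fix a representative, and let $\rho_x(\gamma) \in \Gamma$ denote the representative of the $\equiv_x$-class containing $\gamma$. Realize $V(S_\Gamma(G))$ as $\{(x, \rho_x(\gamma)) : x \in V(G),\ \gamma \in \Gamma\} \subseteq V(P_\Gamma(G))$. Since $S_\Gamma(G)$ was obtained from $P_\Gamma(G)$ purely by identifying vertices, it is an induced subgraph of $P_\Gamma(G)$ with this choice of representatives. Define $r: P_\Gamma(G) \to S_\Gamma(G)$ by $r((x, \gamma)) = (x, \rho_x(\gamma))$; then $r$ fixes $V(S_\Gamma(G))$ pointwise, and it only remains to check that $r$ is a homomorphism, which reduces to the twin property stated above.

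To prove the twin claim, fix $\gamma_1 \equiv_x \gamma_2$ with $\gamma_k = (\phi_k, \psi_k, \pi_k)$, and let $(y, \delta)$ be any vertex of $P_\Gamma(G)$ with $\delta = (\phi_\delta, \psi_\delta, \pi_\delta)$. If $xy \in E_i(G)$, the edge $(x, \gamma_k)(y, \delta)$ has colour $\phi_k(i)\phi_\delta(i)$, which is independent of $k$ by condition~(1) of $\equiv_x$. If $xy \in A_j(G)$ (or $yx \in A_j(G)$), the arc between $(x, \gamma_k)$ and $(y, \delta)$ has colour $\psi_k(j)\psi_\delta(j)$, independent of $k$ by condition~(2), while its orientation is determined by whether the $j$-th coordinate of $\pi_k$ equals that of $\pi_\delta$, which is independent of $k$ by condition~(3). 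If $xy$ is neither an edge nor an arc of $G$, no edge or arc is present in either pair. Hence $r$ preserves edges, arcs, colours, and orientations, and is therefore a homomorphism. I expect the main (purely notational) obstacle to be tracking the arc sub-cases — $xy$ vs.\ $yx$ oriented in $G$, crossed with whether the $j$-th coordinates of $\pi_k$ and $\pi_\delta$ agree — but no genuine difficulty arises beyond unpacking the definition of $P_\Gamma(G)$.
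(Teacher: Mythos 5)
Your proposal is correct and follows essentially the same route as the paper: both define the retraction $r((x,\gamma)) = [\gamma]_{\equiv_x}$ (equivalently, the chosen representative) and verify it is a homomorphism fixing $S_\Gamma(G)$, with your ``twin property'' being exactly the check the paper dispatches with ``by the definitions of $S_\Gamma(G)$ and the relations $\equiv_x$.'' Your write-up is just more explicit, including spelling out the arc-orientation subcases that the paper leaves to the reader.
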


\begin{proof}
Let $r: V(P_\Gamma(G)) \to V(S_\Gamma(G))$ be defined by $r((x, \alpha)) = [\alpha]_{\equiv_x}$.
If $(x, \alpha)(y, \beta) \in E_i(P_\Gamma(G))$, then by the definitions of $S_\Gamma(G)$ and the relations $\equiv_x$ we have $[\alpha]_{\equiv_x}[\beta]_{\equiv_y} \in E_i(S_\Gamma(G))$.  The argument is the same when
$(x, \alpha)(y, \beta) \in A_i(P_\Gamma(G))$.
\end{proof}

The results of the previous section can be seen to hold using $S_\Gamma$ in place of $P_\Gamma$.
We have chosen not to do so because the latter $(m, n)$-mixed graph is conceptually easier to work with.
The $(m, n)$-mixed graph $S_\Gamma$ is useful in determining the complexity of deciding whether a 
given $(m, n)$-mixed graph is a $\Gamma$-switchable core.

%

Welzl \cite{Welzl} and Fellner \cite{Fellner} independently proved that every graph $G$ has a unique induced subgraph $H$ which is a core and for which there is a homomorphism $G \to H$, where uniqueness is up to isomorphism.
The same result holds for $(m, n)$-mixed graphs with the identical proof.  

\begin{theorem} {\rm \cite{Fellner, Welzl}}
Up to isomorphism, every $(m, n)$-mixed graph $G$ has a unique induced subgraph $H$ which is a core and for which there is an onto homomorphism $G \to H$.  
\label{CoreThm}
\end{theorem}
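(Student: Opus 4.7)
The plan is to follow the classical Welzl--Fellner argument verbatim, since the notion of homomorphism for $(m, n)$-mixed graphs is formally parallel to that for plain graphs: homomorphisms compose, and ``preserves edges, arcs, colours, and arc-orientations'' is stable under all the set-theoretic manipulations that the proof uses. The argument splits into existence and uniqueness.

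For existence, I would choose $H$ to be an induced subgraph of $G$ of minimum order for which there is a homomorphism $f: G \to H$; such an $H$ exists since $G$ itself is a candidate. Then $f$ is onto on vertices, for otherwise the induced subgraph of $G$ on $f(V(G))$ would be a strictly smaller candidate. Similarly $H$ is a core: if $H$ admitted a homomorphism to a proper subgraph, composing with $f$ would yield a homomorphism of $G$ into the induced subgraph of $G$ on the image of the composition, which is strictly smaller than $H$ and so contradicts minimality.

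For uniqueness, suppose $H_1$ and $H_2$ are two induced subgraphs of $G$ that are cores, together with onto homomorphisms $f_i: G \to H_i$. Because each $H_i$ is induced in $G$, the restrictions $h = f_2|_{V(H_1)}: H_1 \to H_2$ and $g = f_1|_{V(H_2)}: H_2 \to H_1$ are homomorphisms: every edge or arc of $H_i$ is an edge or arc of $G$ of the same colour and the same orientation. I would next establish the standard lemma that every endomorphism $\varphi$ of a finite core $C$ is an automorphism: the image of $\varphi$ spans a subgraph of $C$ to which $C$ maps, so coreness forces $\varphi$ to be surjective on vertices and hence bijective on vertices; by finiteness some power $\varphi^n$ is the identity on vertices, so $\varphi^{-1} = \varphi^{n-1}$ is itself a homomorphism, making $\varphi$ an honest automorphism.

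Applying this lemma to $\alpha = g \circ h \in \mathrm{End}(H_1)$ and $\beta = h \circ g \in \mathrm{End}(H_2)$ shows that both are automorphisms, so $h$ and $g$ are bijections on vertices. Writing $g^{-1} = h \circ \alpha^{-1}$ and $h^{-1} = \alpha^{-1} \circ g$ then exhibits the inverse of each map as a composition of homomorphisms, so $h$ and $g$ are genuine isomorphisms and $H_1 \cong H_2$. The main obstacle is the endomorphism-equals-automorphism lemma: one has to confirm that the inverse vertex-bijection respects not only adjacency but also every colour class $E_i$, every colour class $A_j$, and the direction of every arc. The identity $\varphi^{-1} = \varphi^{n-1}$ settles this in a single stroke, because any structural feature preserved by $\varphi$ is automatically preserved by each of its iterates.
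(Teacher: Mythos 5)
Your argument is sound and reaches the right conclusion, but it is worth noting that the paper does not actually write out a proof of this theorem: it cites Welzl and Fellner and remarks that the classical proof carries over verbatim. The closest thing to a proof in the paper is the $\Gamma$-switchable analogue (Theorem \ref{SwitchCoreThm}), whose uniqueness argument has the same skeleton as yours --- produce mutual homomorphisms $h: H_1 \to H_2$ and $g: H_2 \to H_1$ by composing the inclusion of one core into $G$ with the retraction of $G$ onto the other, and use coreness to force surjectivity --- but closes differently. The paper concludes via a counting lemma (Lemma \ref{SwitchIso}): mutual onto homomorphisms between finite structures force $|V(H_1)| = |V(H_2)|$, hence vertex bijections, hence injectivity on edges, hence equal edge counts, hence both maps are isomorphisms. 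You instead invoke the ``every endomorphism of a finite core is an automorphism'' lemma, proved by observing that a vertex-bijective endomorphism $\varphi$ has finite order as a permutation, so $\varphi^{-1} = \varphi^{n-1}$ is itself a homomorphism. Both routes are standard and both adapt to $(m,n)$-mixed graphs without friction, since colour classes and arc orientations are preserved by compositions and by iterates; your route has the advantage of isolating a reusable lemma about cores, while the paper's counting argument is what it needs anyway for the switchable setting. You also supply the existence half, which the paper's switchable proof omits entirely.

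One step of your existence argument is incompletely justified. You take $H$ of minimum order with $G \to H$ and claim that if $H$ mapped to a proper subgraph, the induced subgraph of $G$ on the image of the composition would be strictly smaller than $H$. That is true when the proper subgraph has fewer vertices, but the paper's definition of core (no homomorphism to a proper subgraph) also covers proper \emph{spanning} subgraphs: if $r: H \to H''$ with $V(H'') = V(H)$ and $r$ vertex-surjective, the induced subgraph on the image is $H$ itself and no contradiction with minimality of order arises. This sub-case must be excluded separately, e.g.\ by noting that a vertex-bijective homomorphism is injective on edges and arcs, so it cannot land in a spanning subgraph with strictly fewer edges and arcs. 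This is exactly the counting observation underlying Lemma \ref{SwitchIso}, so the patch is routine, but as written the existence step has a hole.
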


This uniquely defined subgraph $H$ in the previous theorem is called \emph{the core of $G$}.

We now show that Theorem \ref{CoreThm} extends to $\Gamma$-switchable cores.  The proof is essentially a direct translation of Welzl's proof to $\Gamma$-switchable homomorphisms.

\begin{theorem}
Let $G$ be a $(m, n)$-mixed graph and let $\Gamma$ be a $(m, n)$-switching group.
Up to $\Gamma$-switchable isomorphism,  $G$ has a unique induced subgraph $H$ which is a core and for which there is an onto 
$\Gamma$-switchable homomorphism $G \xrightarrow[\Gamma]{} H$.  
\label{SwitchCoreThm}
\end{theorem}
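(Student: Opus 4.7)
The plan is to mimic Welzl's proof of Theorem \ref{CoreThm}, substituting $\Gamma$-switchable homomorphisms for ordinary ones and using Proposition \ref{compose} (composition) and Lemma \ref{SwitchIso} (mutual onto switchable homomorphisms give a $\Gamma$-switchable isomorphism) as the main tools.

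For existence, I would choose, among all induced subgraphs $H$ of $G$ admitting an onto switchable homomorphism $G \xrightarrow[\Gamma]{} H$, one with $|V(H)|$ minimum (the collection is nonempty since $H = G$ qualifies via the identity). The claim is that this $H$ is a $\Gamma$-switchable core. Otherwise there exist $H^* \in [H]_\Gamma$ and a homomorphism $g: H^* \to H''$ with $H''$ a proper subgraph of $H$, and I split into two cases according to the image $U = g(V(H^*))$. If $U \subsetneq V(H)$, then every edge and arc in the image of $g$ lies in $H'' \subseteq H$ with both endpoints in $U$, hence in $H[U]$, so $g$ factors as a homomorphism $H^* \to H[U]$; composing with $G \xrightarrow[\Gamma]{} H$ via Proposition \ref{compose} yields an onto switchable homomorphism $G \xrightarrow[\Gamma]{} H[U]$, contradicting minimality since $H[U]$ is an induced subgraph of $G$ on fewer vertices. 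If instead $U = V(H)$, then $g$ is a vertex bijection, and since switching preserves the underlying graph the $(m,n)$-mixed graph $H^*$ has the same number of edges and arcs as $H$; because $g$ is a colour- and orientation-preserving hom sending distinct edges/arcs to distinct edges/arcs, this gives $|E(H'')| + |A(H'')| \geq |E(H)| + |A(H)|$, contradicting that $H''$ is a proper subgraph of $H$ on the same vertex set.

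For uniqueness, suppose $H_1$ and $H_2$ are two induced subgraphs of $G$ each of which is a $\Gamma$-switchable core and receives an onto switchable homomorphism from $G$. Composing the inclusion $H_1 \to G$ (a homomorphism, hence switchable by Observation \ref{Obs3.1}) with $G \xrightarrow[\Gamma]{} H_2$ via Proposition \ref{compose} produces $f: H_1 \xrightarrow[\Gamma]{} H_2$, and symmetrically $g: H_2 \xrightarrow[\Gamma]{} H_1$. The switchable endomorphism $g \circ f$ of $H_1$ cannot have a proper vertex image, else the factoring argument above would give $H_1 \xrightarrow[\Gamma]{} H_1[U]$, contradicting that $H_1$ is a switchable core; so $g \circ f$ is onto on vertices, which forces $f$ to be vertex-surjective, and by symmetry so is $g$. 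Lemma \ref{SwitchIso} then delivers $H_1 \cong_\Gamma H_2$. The main obstacle is the vertex-bijection subcase of existence, where one must carefully use that switching never deletes edges or arcs to rule out a switchable homomorphism onto a proper edge- or arc-subgraph on the same vertex set; the remainder is a routine translation of Welzl's argument.
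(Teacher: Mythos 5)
Your uniqueness argument is essentially the paper's own proof: compose the inclusion of one core into $G$ with the onto switchable homomorphism onto the other (via Observation \ref{Obs3.1} and Proposition \ref{compose}), observe that the resulting switchable endomorphisms of $H_1$ and $H_2$ must be vertex-onto because a switchable core admits no switchable homomorphism to a proper induced subgraph, conclude that $f$ and $g$ are onto, and invoke Lemma \ref{SwitchIso}. (One small bookkeeping point: surjectivity of $g \circ f$ directly forces the \emph{outer} map $g$ to be onto; you attribute it to $f$, but since you also use the symmetric composition both conclusions are recovered, exactly as in the paper.)

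Where you differ is that the paper's proof consists \emph{only} of the uniqueness argument --- it opens with ``Suppose $H_1$ and $H_2$ are both $\Gamma$-switchable cores of $G$'' and never establishes that such a subgraph exists. Your existence argument (take $H$ induced with $|V(H)|$ minimum among targets of onto switchable homomorphisms from $G$, then rule out a switchable homomorphism of $H$ to a proper subgraph by factoring through $H[U]$ when the vertex image $U$ is proper, and by an edge/arc count when the image is a vertex bijection, using that switching preserves the underlying graph) is correct and fills a gap the paper leaves implicit. The vertex-bijection subcase, which you rightly flag as the delicate point, works because the underlying graphs are simple, so a vertex bijection sends distinct edges and arcs to distinct edges and arcs and a proper subgraph on the full vertex set would have strictly fewer of them. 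So your proposal is sound and, if anything, more complete than the published proof.
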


{\bf Proof}.
Suppose $H_1$ and $H_2$ are both $\Gamma$-switchable cores of $G$.  We will show that $H_1 \cong_\Gamma H_2$.  Since $H_1$ is a $\Gamma$-switchable core of $G$, and $H_2$ is an induced subgraph of $G$, there 
is a homomorphism $H_2 \to G$ and a
$\Gamma$-switchable homomorphism $G \xrightarrow[\Gamma]{} H_1$.
Thus,  there exists a $\Gamma$-switchable homomorphism $f: H_2 \xrightarrow[\Gamma]{} H_1$ and,
similarly, a $\Gamma$-switchable homomorphism $g: H_1 \xrightarrow[\Gamma]{} H_2$. 
Composing $f$ and $g$ gives $\Gamma$-switchable homomorphisms
$H_2 \xrightarrow[\Gamma]{} H_2$ and $H_1 \xrightarrow[\Gamma]{} H_1$, both of which must be onto by the definition of a $\Gamma$-switchable core.
Therefore $f$ and $g$ are both onto.
The result now follows from Lemma \ref{SwitchIso}.
$\Box$

This uniquely defined subgraph $H$ in the previous theorem is called \emph{the $\Gamma$-switchable core of $G$}.

\begin{theorem}
Let $G$ be a $(m, n)$-mixed graph and let $\Gamma$ be an Abelian $(m, n)$-switching group.
Then $G$ is a $\Gamma$-switchable core if and only if $S_\Gamma(G)$ is a core.
\end{theorem}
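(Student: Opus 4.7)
The plan is to prove both implications by contrapositive.

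For the forward direction, assume $f\colon S_\Gamma(G)\to K$ is a homomorphism onto a proper subgraph $K\subsetneq S_\Gamma(G)$ (take $K$ to be the image). A short count on edges of each color class rules out $f$ being a bijection on vertices, as that would force $K=S_\Gamma(G)$. The key structural observation is that $f$ cannot identify two distinct representatives $(x,\gamma_1),(x,\gamma_2)\in V(S_\Gamma(G))$ sharing the same first coordinate: since $\gamma_1\not\equiv_x\gamma_2$, the definition of $\equiv_x$ supplies an edge or arc incident with $x$ in $G$ on which the relevant permutation ($\phi$, $\psi$) or orientation entry ($\pi$) disagrees, so for any representative $(y,\delta)\in V(S_\Gamma(G))$ the two $S_\Gamma(G)$-incidences $(x,\gamma_1)(y,\delta)$ and $(x,\gamma_2)(y,\delta)$ carry distinct colors or opposite orientations that a single homomorphism identifying $(x,\gamma_1)$ and $(x,\gamma_2)$ cannot simultaneously preserve. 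Hence any non-injectivity of $f$ collapses a pair $(x_1,\gamma_1),(x_2,\gamma_2)$ with $x_1\neq x_2$.

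I then choose a transversal $T\subseteq V(S_\Gamma(G))$ containing both $(x_1,\gamma_1)$ and $(x_2,\gamma_2)$ together with any representative from $V(S_\Gamma(G))\cap S_x$ for each remaining $x$. By the observation identifying transversal subgraphs of $P_\Gamma(G)$ with $[G]_\Gamma$, $T\in[G]_\Gamma$, so $G\xrightarrow[\Gamma]{}T$. The restriction $f|_T$ is non-injective by construction, so its image $I$ satisfies $|V(I)|<|V(G)|$ and $V_I:=\{x\in V(G):(x,\gamma)\in V(I)\}\subsetneq V(G)$. The switching projection from Lemma \ref{P_GammaToG}---switch each $(y,\delta)\in V(I)$ by $\delta^{-1}$ and then send $(y,\delta)\mapsto y$---gives $I\xrightarrow[\Gamma]{}G[V_I]$; composing via Proposition \ref{compose} yields $G\xrightarrow[\Gamma]{}G[V_I]$ with $G[V_I]$ a proper subgraph of $G$.

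For the backward direction, assume $G\xrightarrow[\Gamma]{}H$ for some proper $H\subsetneq G$. Theorem \ref{MapSwitchGraphs} supplies $P_\Gamma(G)\to P_\Gamma(H)$; composing with the retraction $P_\Gamma(H)\to S_\Gamma(H)$ of Theorem \ref{S_Gamma} and restricting to the induced subgraph $S_\Gamma(G)\subseteq P_\Gamma(G)$ produces $S_\Gamma(G)\to S_\Gamma(H)$. Because $\equiv_x^H$ is coarser than $\equiv_x^G$, the representatives defining $S_\Gamma(H)$ can be chosen inside those already picked for $S_\Gamma(G)$, realising $S_\Gamma(H)$ as a subgraph of $S_\Gamma(G)$; the containment is proper since either $V(H)\subsetneq V(G)$ strictly reduces the number of representatives, or $V(H)=V(G)$ and any edge or arc of $G$ absent from $H$ contributes an edge of $S_\Gamma(G)$ absent from $S_\Gamma(H)$. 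The main obstacle is the same-first-coordinate injectivity claim in the forward direction, which is what converts the failure of $f$ to be a bijection into a genuine collapse between distinct-first-coordinate vertices usable for the transversal construction.
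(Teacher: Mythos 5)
Your proof is correct, and for the harder implication (if $S_\Gamma(G)$ is not a core then $G$ is not a $\Gamma$-switchable core) it takes a genuinely different route from the paper's. The paper starts from an endomorphism $f$ of $S_\Gamma(G)$ that omits some vertex $(v,\gamma)$, argues that $f((v,\gamma))\notin R_v$, switches the remaining vertices of the fibre $R_v$ by $\pi^{-1}\gamma$ so that they all acquire the incidences of $(v,\gamma)$, and thereby modifies $f$ into an endomorphism whose image misses the whole fibre $R_v$; it then projects back to $G$ and restricts to the copy $\{(v,e):v\in V(G)\}$ to obtain a switchable homomorphism of $G$ into $G-v$. You avoid the fibre-emptying step entirely: you note that $f$ must collapse two vertices, that a collapsed pair must lie over \emph{distinct} vertices of $G$, and you run a transversal through that pair so that its image already projects onto a proper vertex subset of $G$; composing the projection of Lemma \ref{P_GammaToG} with Proposition \ref{compose} finishes the job. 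This is shorter and, in one respect, tighter: your justification of the structural claim (a common neighbour $(y,\delta)$ would have to be joined to the single image vertex by two colours, or by both orientations, which a simple underlying graph forbids) is sound, whereas the paper's corresponding claim that $f((v,\gamma))\notin R_v$ is argued without tracking where the witness $(w,e)$ is sent. Both your argument and the paper's rest on the same fact that distinct vertices of a fibre of $S_\Gamma(G)$ are separated by some neighbour; with condition 3 of $\equiv_x$ read literally ($\pi_1=\pi_2$ in full, rather than agreement only on the reversal coordinates of arc colours present at $x$) this can fail at a vertex with no incident arcs, but that defect is inherited from the paper's definition and affects its proof equally, so it is not a gap peculiar to yours. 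Your other implication, via $S_\Gamma(G)\to P_\Gamma(G)\to P_\Gamma(H)\to S_\Gamma(H)$, coincides with the paper's, and you supply more detail than the paper does on why $S_\Gamma(H)$ sits as a proper subgraph of $S_\Gamma(G)$.
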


{\bf Proof}.
Suppose $G$ is not a $\Gamma$-switchable core.
Then there exists a proper subgraph $H$ of $G$ such that $G \xrightarrow[\Gamma]{} H$.
Therefore $S_\Gamma(G) \to S_\Gamma(H)$ (as $S_\gamma(G) \to P_\Gamma(G) \to P_\Gamma(H) \to S_\Gamma(H)$).
Since $S_\Gamma(H)$ is a proper subgraph of $S_\Gamma(G)$, it follows that 
$S_\Gamma(G)$ is not a core.

Now suppose $S_\Gamma(G)$ is not a core.
Then it admits a homomorphism, $f$, to a proper subgraph.
Note that the vertices of $S_\Gamma(G)$ can be regarded as ordered pairs $(v, \gamma)$,
where $v \in V(G)$ and $\gamma \in \Gamma$, rather than as equivalence classes of such
ordered pairs; for notational convenience we write $(v, \gamma)$ instead of $[(v, \pi)]_{\equiv_v}$.

Let $S'$ be the $(m, n)$-mixed graph obtained from $S_\Gamma(G)$ by
switching at each vertex $(v, \gamma)$ with respect to $\gamma^{-1}$.
By definition of $S_\Gamma(G)$, and as in the proof of Theorem \ref{MapSwitchGraphs}, 
if $(x, \pi)(y, \rho) \in E_i(S')$ 
then $xy \in E_i(G)$.  Similarly, if
$(x, \pi)(y, \rho) \in A_j(S')$, then $xy \in A_j(G)$.
Therefore,  the function $p: V(S') \to V(G)$  that sends
$(v, \gamma)$ to $v$ for all $(v, \gamma) \in V(S')$ is a homomorphism,
and hence 
a $\Gamma$-switchable homomorphism $S_\Gamma(G) \xrightarrow[\Gamma]{} G$.

Suppose there exists a vertex $x \in V(G)$ such that no vertex in
the set $R_x = \{(x, \pi): \pi \in \Gamma\}$ belongs to the range of $f$. 
Let $h: G \to S_\Gamma(G)$ be the homomorphism defined
by $h(v) = (v, e)$ for all $v \in V(G)$.
Let $G'$ be the subgraph of $S_\Gamma(G)$ induced by $\{(v, e): v \in V(G)\}$.
Then $G' \cong G$. 
The restriction of the function $p \circ f \circ h$ to $V(G')$ is
a $\Gamma$-switchable homomorphism of $G$ to a proper subgraph of $G$, 
it follows that $G$ is not a $\Gamma$-switchable core.

The previous implies that it suffices to show that
there is a 
$\Gamma$-switchable homomorphism 
$g: S_\Gamma(G) \xrightarrow[\Gamma]{} S_\Gamma(G)$
for which there  exists a vertex $x \in V(G)$ such that no vertex in
$R_x$ belongs to the range of $f$.  We now show such a homomorphism exists.

Since $f$ is a homomorphism of $S_\Gamma(G)$ to a proper subgraph, there
exists a vertex $(v, \gamma)$ which is not in the range of $f$.
We claim that $f((v, \gamma)) \not \in R_v$.
By definition of $S_\Gamma(G)$ for each vertex $(v, \pi) \in R_v \setminus \{(v, \gamma)\}$
there exists a vertex $(w, e)$ so that the 
edge or arc joining $(w, e)$ and $(v, \gamma)$ has a different colour or orientation
(if appropriate) from the edge or arc joining $(w, e)$ and $(v, \pi)$.
Since $f$ is a homomorphism, it can not map $(v, \gamma)$ to  $(v, \pi)$ for any $w \in V(G')$.
The claim now follows.

Let $S''$ be obtained from $S_\Gamma(G)$ by switching at each
vertex $(v, \pi) \in R_v \setminus \{(v, \gamma)\}$ with respect to $\pi^{-1}\gamma$.
Then, the function $g:V(S'') \to V(S'')$ defined by 
$$g((x, \rho)) = \begin{cases}
f((x, \rho)) & (x, \rho) \not\in R_v\\
f((v, \gamma)) & (x, \rho) \in R_v
\end{cases}$$
is a homomorphism, and hence a 
$\Gamma$-switchable homomorphism $S_\Gamma(G) \xrightarrow[\Gamma]{} S_\Gamma(G)$.
Since no vertex in $R_v$ is in the range of $g$, the proof is complete.
$\Box$

Hell and Ne\v{s}et\v{r}il \cite{HN} proved that it is NP-complete to decide that a given graph or digraph is not a core.  
Applying their result to $(m, n)$-mixed graphs with only edges of one colour, or only arcs of one colour,  has the following consequence.

\begin{corollary}
The problem of deciding whether a given $(m, n)$-edge-coloured graph is not a core is NP-complete.
\end{corollary}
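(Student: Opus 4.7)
The plan is first to place the problem in \textsf{NP} and then to obtain hardness by a near-trivial reduction from the analogous problem for graphs or digraphs, exactly as the hint preceding the corollary suggests.

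For membership in \textsf{NP}, I would observe that a $(m,n)$-mixed graph $G$ fails to be a core precisely when there exist a proper induced subgraph $H$ of $G$ and a homomorphism $f : G \to H$. A nondeterministic machine can guess the vertex set of $H$ and the map $f$ in polynomial time, and verifying that $f$ preserves each edge together with its colour and each arc together with its colour and orientation takes time polynomial in $|V(G)| + |E(G)| + |A(G)|$.

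For \textsf{NP}-hardness, I would split on whether $m \geq 1$ or $m = 0$ (the case $m = n = 0$ is vacuous, since such mixed graphs have no edges or arcs and are automatically cores). Assume first that $m \geq 1$. Given an ordinary graph $G'$ as an instance of the problem ``is $G'$ not a core?'' (\textsf{NP}-complete by Hell and Ne\v{s}et\v{r}il \cite{HN}), construct the $(m,n)$-mixed graph $G$ on vertex set $V(G')$ in which every edge of $G'$ is declared to be an edge of colour $1$, and no arcs are added. Any homomorphism of $G$ to an induced submixed-graph $H$ must send edges to edges of the same colour, so it is a homomorphism of $G'$ to the underlying graph of $H$; conversely, any graph homomorphism from $G'$ to an induced subgraph of $G'$ yields a mixed-graph homomorphism of $G$ to the corresponding induced submixed-graph of $G$. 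Hence $G$ is a core as an $(m,n)$-mixed graph if and only if $G'$ is a core as a graph. The construction is clearly polynomial, giving the desired reduction. If instead $m = 0$ and $n \geq 1$, I would run the identical argument starting from an instance digraph $D$ of the ``not a core'' problem for digraphs, turning each arc of $D$ into an arc of colour $1$ in $G$ and adding no edges, and invoking the digraph version of the Hell--Ne\v{s}et\v{r}il result.

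The main potential obstacle is really only bookkeeping: one must check that the core of $G'$ (respectively $D$) as a plain graph (respectively digraph) coincides with its core when viewed as a monochromatic $(m,n)$-mixed graph. This is immediate because a mixed-graph homomorphism with only one edge colour or one arc colour used imposes no constraints beyond those of the ordinary graph or digraph homomorphism, and conversely every such ordinary homomorphism trivially respects the single colour. With that observation noted, the reduction establishes \textsf{NP}-hardness, and together with membership in \textsf{NP} the corollary follows.
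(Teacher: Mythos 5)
Your proposal is correct and follows essentially the same route as the paper, which obtains the corollary by applying the Hell--Ne\v{s}et\v{r}il NP-completeness result for graphs and digraphs to $(m,n)$-mixed graphs whose edges (or arcs) all carry a single colour. Your additional explicit verification of membership in \textsf{NP} and of the equivalence of the monochromatic mixed-graph core with the ordinary graph (or digraph) core is exactly the bookkeeping the paper leaves implicit.
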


We now determine a $\Gamma$-switchable version of the above corollary. 
For an arbitrary $(m, n)$-switching group there is no guarantee that the list of the switching sequence 
used to transform a given $(m, n)$-mixed graph $G$ to that it has a homomorphism to a proper 
subgraph has length polynomial in $|V|$, so we can prove only NP-hardness.


\begin{theorem}
Let $G$ be a $(m, n)$-mixed graph and get $\Gamma$ be a $(m, n)$-switching group.
The problem of deciding whether $G$ is not a $\Gamma$-switchable core is NP-hard.
\end{theorem}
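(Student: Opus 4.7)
The plan is a straightforward polynomial-time reduction from the NP-complete problem established in the preceding corollary: given an $(m,n)$-mixed graph, decide whether it is not a core. Given such an instance $G_0$, I would output the pair $(G_0, \Gamma)$, where $\Gamma$ is the trivial subgroup $\{e\}$ of $S_m \otimes (S_2 \wr S_n)$. The output is produced in time linear in the input.

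For correctness, observe that the only element of $\Gamma$ is the identity, so switching at any vertex with respect to any $\gamma \in \Gamma$ leaves the mixed graph unchanged. Consequently $[G_0]_\Gamma = \{G_0\}$, and a $\Gamma$-switchable homomorphism $G_0 \xrightarrow[\Gamma]{} H$ into any $(m,n)$-mixed graph $H$ is nothing other than an ordinary homomorphism $G_0 \to H$. It follows directly from the definitions that $G_0$ is a $\Gamma$-switchable core if and only if $G_0$ is a core in the usual sense, so the instance is a ``yes''-instance of the non-core problem exactly when its image is a ``yes''-instance of the non-$\Gamma$-switchable-core problem.

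I do not anticipate any substantive technical obstacle in the reduction itself; the hard work has already been done in the preceding corollary (which in turn rests on Hell and Ne\v{s}et\v{r}il's theorem). The interesting feature of the theorem is the gap between \emph{hard} and \emph{complete}: for an arbitrary, possibly non-Abelian, $(m,n)$-switching group $\Gamma$, the authors' remark preceding the theorem points out that there is no a priori polynomial bound on the length of a switching sequence required to transform a given $(m,n)$-mixed graph into one that admits a homomorphism onto a proper subgraph. A short certificate is therefore not obviously available, so the problem is not known to lie in NP at this level of generality. This is exactly why NP-hardness, rather than NP-completeness, is the strongest statement made here, and it is precisely this gap that is closed in the following section under the Abelian hypothesis.
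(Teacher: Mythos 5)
Your reduction is correct only for a weaker reading of the statement than the one the paper intends and needs. The theorem is universally quantified over the switching group: for \emph{every} fixed $(m,n)$-switching group $\Gamma$, the decision problem whose input is a single $(m,n)$-mixed graph $G$ and whose question is ``is $G$ not a $\Gamma$-switchable core?'' is NP-hard. (That this is the intended reading is forced by the corollary that follows, which needs NP-hardness for each fixed Abelian $\Gamma$, not merely for the trivial group.) Your reduction outputs the pair $(G_0,\{e\})$, i.e.\ it treats $\Gamma$ as part of the instance and chooses it to be trivial. That establishes hardness only for $\Gamma=\{e\}$ (where $\Gamma$-switchable cores coincide with ordinary cores), and says nothing about, say, $\Gamma=S_m\otimes(S_2\wr S_n)$. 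For a nontrivial fixed $\Gamma$ you are not free to ``turn switching off,'' and the equivalence between being a core and being a $\Gamma$-switchable core genuinely fails in general, so the gap is substantive rather than cosmetic.

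The paper's argument closes this gap with one extra idea. It reduces from the Hell--Ne\v{s}et\v{r}il problem of recognizing non-cores of ordinary graphs, mapping a graph $H$ to the monochromatic $(m,n)$-mixed graph $H'$ on the same underlying graph. If $H$ is not a core then the same vertex map is a homomorphism of $H'$ to a proper monochromatic subgraph, with no switching at all; conversely, by Observation~\ref{UnderlyingHom} any $\Gamma$-switchable homomorphism of $H'$ to a proper subgraph descends to a homomorphism of $\mathit{underlying}(H')=H$ to a proper subgraph. Both directions are independent of $\Gamma$, which is exactly what makes the reduction work for an arbitrary fixed switching group. Your closing remarks about why only NP-hardness (rather than NP-completeness) is claimed are accurate and match the paper's discussion, but the reduction itself needs to be replaced along these lines.
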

{\bf Proof.}
Suppose first that $m > 0$.  The transformation is from the problem of deciding whether a given graph $H$ is not a core.    The transformed instance is the $(m, n)$-mixed graph $H'$ whose underlying graph is $G$ and all of whose edges are the same colour.  It can clearly be accomplished in polynomial time. It follows from 
Observation \ref{UnderlyingHom}
that $H'$ is not a $\Gamma$-switchable core if and only if $H$ is not a core.

The proof is the same when $m = 0$ and $n > 0$.  The result now follows.  
$\Box$

When the $(m, n)$-switching group $\Gamma$ is Abelian, the switching sequence 
used to transform a given $(m, n)$-mixed graph $G$ to that it has a homomorphism to a proper 
subgraph has length polynomial in $|V|$.

\begin{corollary}
Let $\Gamma$ be an Abelian $(m, n)$-switching group.
The problem of deciding whether a given $(m, n)$-mixed graph is not a $\Gamma$-switchable core is NP-complete.
\end{corollary}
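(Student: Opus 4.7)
The plan is to combine the NP-hardness already proved in the preceding theorem with a direct verification that the problem lies in NP. The reduction from non-core recognition given just above goes through without change in the Abelian setting (it uses monochromatic instances, for which the relevant switching group can be taken to be trivial, hence Abelian), so NP-hardness is immediate.

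For membership in NP, the key observation is the one in the paragraph immediately preceding the corollary, together with Proposition \ref{SwitchIndep} and the discussion at the start of Section \ref{Abelian}: when $\Gamma$ is Abelian, any $G' \in [G]_\Gamma$ can be obtained from $G$ by applying exactly one switch (possibly the identity) at each vertex, all simultaneously. Consequently, a switch sequence realizing $G'$ from $G$ can be encoded as a function $\sigma : V(G) \to \Gamma$, which has size polynomial in $|V(G)|$.

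A certificate that $G$ is not a $\Gamma$-switchable core is then a triple $(\sigma, H, f)$, where $\sigma : V(G) \to \Gamma$ specifies a switched copy $G^\sigma \in [G]_\Gamma$, $H$ is a proper subgraph of $G$ (given, say, by listing the edges and arcs of $G$ absent from $H$), and $f : V(G^\sigma) \to V(H)$. Each component has polynomial size, and verification --- constructing $G^\sigma$ from $G$ and $\sigma$, checking that $H \subsetneq G$, and checking that $f$ is a homomorphism of $(m,n)$-mixed graphs (preserving all edges, arcs, and their colours) --- is clearly achievable in polynomial time. Hence the problem lies in NP, and together with the NP-hardness from the previous theorem this yields NP-completeness.

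The only substantive step is the reduction from an a priori unbounded switch sequence to a single switch per vertex; this is precisely where the Abelian hypothesis is used, and it is the obstruction that prevents the same argument from establishing NP-membership in the general case treated by the previous theorem.
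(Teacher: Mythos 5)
Your proof is correct and follows the same route the paper intends: NP-hardness from the preceding theorem's reduction, and NP-membership from the observation (made in the paragraph just before the corollary and at the start of Section \ref{Abelian}) that an Abelian group allows any switch sequence to be compressed to one switch per vertex, giving a polynomial-size certificate. The paper leaves this argument implicit, but your explicit certificate $(\sigma, H, f)$ is exactly the intended verification.
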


\section{Switchable colourings}

We will preset a dichotomy theorem for the problem of deciding whether a given
$(m, n)$-mixed graph 
is $k$-colourable, where $k$ is a positive integer.

Let $G$ be a simple graph.
For any $k$-colouring of $G$, identifying the vertices assigned the same colour results
in a simple graph $H$ such that there is a homomorphism $G \to H$.
Thus a $k$-colouring of $G$ can be defined as a homomorphism of $G$ to a (i.e. some)
simple graph on $k$ vertices.  
Since the existence of a homomorphism of $G$ to $H$ implies the existence of a homomorphism of 
$G$ to any supergraph of $H$, an equivalent definition is that 
a  $k$-colouring of $G$ is a homomorphism $G \to K_k$.

Now let $G$ be a $(m, n)$-mixed graph, and $k$ be a positive integer.  
(Recall that $\mathit{underlying}(G)$ is assumed to be simple.)
A \emph{$k$-colouring} of $G$ is a homomorphism of $G$ to a  $(m, n)$-mixed
graph $H$ on $k$ vertices.   For the same reason as above, the underlying graph of
$H$ can be assumed to be $K_k$.

When $G$ is a  $(1, 0)$-mixed graph, this notion of colouring coincides with the usual notion of graph colouring.
When $G$ is a $(0, 1)$-mixed graph, this notion of colouring coincides with oriented colouring (e.g. see \cite{Sopena})

If $\Gamma$ is a $(m, n)$-switching group, then a 
\emph{$\Gamma$-switchable $k$-colouring of $G$} is a $k$-colouring of some $G' \in [G]_\gamma$.

When $G$ is a  $(m, 0)$-mixed graph and $\Gamma$ is a cyclic group, then $\Gamma$-switchable $k$-colouring coincides with the colourings studied in \cite{BFHN, BG}.
When $G$ is a   $(0, 1)$-mixed graph and the group $S_2$ reverses the orientation of arcs incident with a vertex, then $S_2$-switchable $k$-colouring coincides with a pushable $k$-colouring \cite{KM}.
Dichotomy theorems for $S_2$-switchable $k$-colouring of  $(2, 0)$-mixed graphs, and pushable $k$-colouring of 
 $(0, 1)$-mixed graphs are known \cite{BFHN, KM}.  In both cases the problem of deciding whether $G$ has a 
$k$-colouring is solvable in polynomial time if $G$ has a 2-colouring, and is NP-complete if $G$ has no 2-colouring.

\begin{theorem}
Let $\Gamma$ be a  Abelian $(m, n)$-switching group, and let $k$ be a positive integer.
The problem of deciding whether a given  $(m, n)$-mixed graph $G$, has a $\Gamma$-switchable $k$-colouring
is solvable in polynomial time if $k \leq 2$.
If $m \geq 1$ and  $k \geq 3$, then the problem of deciding whether a given  $(m, n)$-mixed graph $G$, has a $\Gamma$-switchable $k$-colouring
is NP-complete.
\end{theorem}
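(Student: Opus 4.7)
The plan is to handle each side of the dichotomy in turn, relying heavily on the machinery of Section~\ref{Abelian}. For $k=1$ a $\Gamma$-switchable $1$-colouring exists iff $G$ has no edges and no arcs, which is trivially checkable. For $k=2$, Observation~\ref{UnderlyingHom} forces $\mathit{underlying}(G)$ to be bipartite (checkable in linear time), and by Theorem~\ref{HomCor} a $\Gamma$-switchable $2$-colouring of $G$ exists iff $G\to P_\Gamma(H)$ for one of the $1+m+2n$ two-vertex $(m,n)$-mixed graphs $H$. Since $m$, $n$, and $|\Gamma|$ are constants of the problem, each $P_\Gamma(H)$ is a graph of constant size. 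For each such target, each choice of bipartition side per connected component of $\mathit{underlying}(G)$, and each of the $|\Gamma|$ values of a root switch in each component, the edge and arc constraints of $G$ become conditions on $\gamma_x\gamma_y$ in the Abelian group $\Gamma$. These propagate along a spanning tree of each component: because $\Gamma$ is Abelian, once the parent's switch is fixed the admissible switches at a child form a single coset in $\Gamma$, so each vertex has at most $|\Gamma|$ admissible switches, and one need only verify consistency on the non-tree edges and arcs. The whole procedure runs in time polynomial in $|V|$.

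For $m\ge 1$ and $k\ge 3$, membership in NP follows from the remark at the start of Section~\ref{Abelian}: since $\Gamma$ is Abelian, every switching sequence can be compressed to a single switch per vertex. A certificate therefore consists of an assignment $\gamma:V\to\Gamma$, a choice of one of the constantly many $k$-vertex $(m,n)$-mixed graphs $H$, and a map $h:V\to V(H)$; verifying that $h$ is a homomorphism of $G^{(\gamma)}$ to $H$ is clearly polynomial. For NP-hardness I reduce from the $k$-colouring problem for undirected graphs, which is NP-hard for every $k\ge 3$. Given a graph $G_0$, construct the $(m,n)$-mixed graph $G'$ whose underlying graph is $G_0$, which has no arcs, and whose edges are all assigned colour $1$ (requiring $m\ge 1$). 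A $k$-colouring $f$ of $G_0$ is already a homomorphism from $G'$ to the $(m,n)$-mixed graph on $[k]$ with underlying graph $K_k$ and every edge coloured $1$; in particular $G'$ has a (trivial) $\Gamma$-switchable $k$-colouring. Conversely, any $\Gamma$-switchable $k$-colouring of $G'$ yields, by Observation~\ref{UnderlyingHom}, a homomorphism from $\mathit{underlying}(G')=G_0$ to a graph on $k$ vertices, i.e.\ a $k$-colouring of $G_0$.

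The main obstacle I foresee is making the $k=2$ algorithm genuinely polynomial: its correctness rests on arguing that, over an Abelian $\Gamma$, every edge and arc constraint reduces to a coset condition in $\Gamma$ and hence propagates through each connected component in a tree-like fashion, so that the apparent branching over switch assignments cannot cause an exponential blowup. Once that is in place, the $k\ge 3$ half is essentially a syntactic consequence of Observation~\ref{UnderlyingHom} combined with the NP-hardness of ordinary graph $k$-colouring.
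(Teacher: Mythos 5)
Your NP-completeness half ($m\ge 1$, $k\ge 3$) is essentially the paper's argument: membership in NP via one switch per vertex (Abelian compression), and hardness by taking an instance of ordinary $k$-colouring and colouring all edges alike, with Observation~\ref{UnderlyingHom} giving the reverse direction. That part is fine.

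The gap is in your $k=2$ algorithm. After fixing the target $H$ and the bipartition, an edge $xy\in E_i(G)$ imposes the condition $\phi_x\phi_y(i)=c$, i.e.\ $\gamma_x\gamma_y$ must lie in the set $T_{i,c}=\{\gamma\in\Gamma:\phi(i)=c\}$, which is a coset of the \emph{stabilizer} of the colour $i$. When that stabilizer is nontrivial (and nothing you have assumed rules this out), fixing the parent's switch leaves a whole coset of admissible values at the child, so your procedure is only computing arc-consistent marginals along the spanning tree. Checking each non-tree edge against those marginals is then checking each fundamental-cycle condition \emph{separately}, but the conditions are coupled through the tree-edge ``slack'' variables they share: one can write down coset systems over an Abelian group (e.g.\ over $\mathbb{Z}_2\times\mathbb{Z}_2$ on an underlying $K_{2,3}$) in which every fundamental cycle is individually satisfiable yet no global assignment exists. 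So ``verify consistency on the non-tree edges'' is not a complete decision procedure as stated; you would need either a genuine Gaussian-elimination/affine-CSP argument for coset constraints, or the paper's route. The paper first reduces to the case where all edge (or arc) colours of $G$ lie in one orbit and $\Gamma$ acts transitively on that colour set; since a transitive Abelian permutation action is regular, the stabilizers are trivial, every constraint becomes an \emph{equation} that determines the child from the parent, and the resulting problem is homomorphism to a target in which each vertex meets exactly one edge of each colour, which is the polynomial case established in Brewster's thesis \cite{RickThesis}. That transitivity reduction is exactly the missing ingredient that makes your propagation deterministic, and without it (or a replacement argument) your polynomial-time claim for $k=2$ is not established.
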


{\bf Proof}.
We first prove NP-completeness.  
Since $\Gamma$ is Abelian, the problem is clearly in NP.
The transformation is from $k$-colouring.
Let the simple graph $G$ be an instance of $k$-colouring.
The transformed instance of $\Gamma$-switchable $k$-colouring is the  $(m, n)$-mixed graph $G'$ with all edges of the same colour (the colour chosen is unimportant).
Since a $k$-colouring of $G$ corresponds to a $k$-colouring of $G'$ -- no switching needed -- and a $\Gamma$-switchable $k$-colouring of $G'$ corresponds to a $k$-colouring of $\mathit{underlying}(G') = G$, the result follows.

Since a  $(m, n)$-mixed graph $G$ has a $\Gamma$-switchable 1-colouring if and only if it has neither edges nor arcs, it suffices to show that the problem of deciding whether a given  $(m, n)$-mixed graph $G$ has a $\Gamma$-switchable $2$-colouring is solvable in polynomial time.

If $G$ has edges and arcs, then $G$ is not 2-colourable. 
Every $G' \in [G]_\Gamma$ has both edges and arcs and no $(m,n)$ mixed graph with both
edges and arcs is 2-colourable.  Hence either $E(G) = \emptyset$ or $A(G) = \emptyset$.

If $\Gamma$ does not act transitively on the edge or arc colours and $G$ has edges or arcs from two different orbits, 
then $G$ is not 2-colourable.
Every $G' \in [G]_\Gamma$ has edges or arcs from two different orbits, and 
no $(m, n)$-mixed graph with edges or arcs of different colours is 2-colourable.
We may therefore assume that $\Gamma$ acts transitively on the set of edge or arc colours (as appropriate).

Suppose first that $A(G) = \emptyset$.
By Theorem \ref{HomCor}, $G$ has a $\Gamma$-switchable $2$-colouring if and only if $G$ has a homomorphism to 
$P_\Gamma(H)$, where $\mathit{underlying}(H) = K_2$ and the colour the edge of $H$ is unimportant because $\Gamma$ acts transitively.
Since an Abelian group that acts transitively also acts regularly, each vertex of $P_\Gamma(H)$ is incident with exactly
one edge of each colour.  It is shown in \cite{RickThesis} that the existence of a homomorphism to a $(m, 0)$-graph 
with this property can be decided in polynomial time.

When $E(G) = \emptyset$ the argument is the same except that the vertices of $H$ are joined by an arc of some colour.
The fact that $\Gamma$ may reverse the orientation of arcs of some colours does not matter.
The essential property is that each vertex of $P_\Gamma(H)$ is incident with exactly
one arc of each colour. 

This completes the proof.
$\Box$

The above theorem does not cover the cases where $k \geq 3$ and $m = 0$.
It therefore provides a dichotomy for $\Gamma$-switchable $k$-colourings of  $(m, 0)$-mixed graphs ($m$ edge-coloured graphs) and more, but not for all simple $(m, n)$ mixed graphs.
The situation in the remaining cases seems to be complicated by how $\Gamma$ 
possibly alters the orientation of arcs.
Completing the dichotomy for $\Gamma$-switchable $k$-colourings of simple $(m, n)$-mixed graphs is left
as a project for future work.

\end{document}